\DeclareRobustCommand{\SkipTocEntry}[5]{}
\newcommand{\numberset}{\mathbb}
\newcommand{\N}{\numberset{N}} 
\newcommand{\Z}{\numberset{Z}}
\newcommand{\A}{\mathcal{A}}
\newcommand{\F}{\mathcal{F}}
\let\P\relax
\let\M\relax
\newcommand{\P}{\mathcal{P}}
\newcommand{\M}{\mathcal{M}}
\newcommand{\Cc}{\mathcal{C}}
\DeclareMathOperator{\Hom}{Hom}
\DeclareMathOperator{\Tot}{Tot^{\oplus}}
\author{Valerio Proietti}
\address{Department of Mathematical Sciences, Københavns Universitet, Universitetsparken 5, 2100 København Ø, Denmark}
\email{valerio@math.ku.dk}
\title[A note on homology for Smale spaces]{A note on homology for Smale spaces}
\begin{document}
\frontmatter

\begin{abstract}
We collect three observations on the homology for Smale spaces defined by Putnam. The definition of such homology groups involves four complexes. It is shown here that a simple convergence theorem for spectral sequences can be used to prove that all complexes yield the same homology. Furthermore, we introduce a simplicial framework by which the various complexes can be understood as suitable ``symmetric'' Moore complexes associated to the simplicial structure. The last section discusses projective resolutions in the context of dynamical systems. It is shown that the projective cover of a Smale space is realized by the system of shift spaces and factor maps onto it.
\end{abstract}

\subjclass{18G35; 37B10, 18G05}
\keywords{Smale spaces, Dimension groups, Homology, Spectral sequences, Projective resolutions.}
\thanks{Supported by the Danish National Research Foundation through the Centre for Symmetry and Deformation (DNRF92).}

\maketitle
\setcounter{tocdepth}{2}
\vspace*{-1ex}		
\tableofcontents

\mainmatter

\addtocontents{toc}{\SkipTocEntry}\section*{Introduction and main results}

When Steven Smale initiated his study of smooth maps on manifolds, he defined the notion of Axiom A diffeomorphism \cite{smale:A}. The main condition is
that the map, restricted to its set of non-wandering points, has a hyperbolic
structure. The non-wandering set of these systems can be
canonically decomposed into finitely many disjoints sets, called \emph{basic sets}, each of which is irreducible in a certain sense. One of Smale's great insights was that, even though one began with a smooth system, the non-wandering set itself would not usually be a
submanifold, but rather an object of fractal-like nature. This can be taken as a motivation for moving from the smooth category to the topological one.

Smale spaces were introduced by Ruelle as a purely topological description of the basic sets of Smale's Axiom A diffeomorphisms \cite{ruelle:thermo}. In this paper we consider the homology theory for Smale spaces introduced by Putnam in \cite{put:HoSmale}. This can be viewed as a solution to Smale's problem of classifying Axiom A systems by relatively simple combinatorial data, in the same fashion that Morse-Smale systems could be described.


Shifts of finite type are the zero dimensional examples of Smale spaces and are the basic building blocks of the theory. Putnam's homology can be viewed as a far-reaching generalization of Krieger's dimension groups for shifts of finite type \cite{krieger:inv}. In the preliminaries of this paper, we review the notion of Krieger's invariant and explain its connection to $K$-theory by examining the stable and unstable equivalence relations which define the associated $C^*$-algebras (this is a well-known result, here it is simply expressed in a slightly unusual form, see Theorem \ref{thm:dimCgroup}).

There are many interesting and open questions concerning Putnam's homology for Smale spaces. In the literature, computations of the homology groups have been done mostly by resorting to the definition, e.g., \cite[Chapter 7]{put:HoSmale}. It is desirable to have some machinery, as it occurs with algebraic topology, which would aid in these calculations by appealing to techniques such as long exact sequences, excision, etc. 

Exact analogues are at the moment not so clear, but it is reasonable that an alternative, perhaps more conceptual definition of the homology could shed some light on these issues. Moreover, this could also lead to clarifying the relations with Čech cohomology and $K$-theory (beyond the case of shifts of finite type). More on these questions can be found in \cite[Chapter 8]{put:HoSmale}. This paper started as an effort to research in this direction.

The technical definition of Putnam's homology groups involves four bicomplexes \cite[Chapter 5]{put:HoSmale}. Only three of these are shown to be quasi-isomorphic, leaving out the largest (but perhaps most natural) double complex, which has a clear connection to $K$-theory. The first result of this paper fills this gap by showing, thanks to a simple convergence theorem for spectral sequences, that this double complex also yields the same homology groups. The formal statement is given in Corollary \ref{cor:conv}.

Section \ref{sec:sim} is concerned with proving a collection of results that are already proved in Putnam's memoir, by taking a slightly different and somewhat more unified perspective. The key observation stems from the simplicial nature of the homology theory for Smale spaces: a given Smale space is suitably ``replaced'' by a bisimplicial shift of finite type, to which Krieger's invariant is applied to get (in conjunction with the Dold-Kan correspondence) a bicomplex which defines the homology groups of interest. 

From this viewpoint, the different variants of this bicomplex appear as the associated Moore complexes (i.e., the normalized chain complexes). There is also an action of the symmetric group which is exploited to obtain all of Putnam's complexes as ``reduced'' complexes with respect to this mixed simplicial-symmetric structure. The main result in this section, proved as application of these methods, is Theorem \ref{thm:degthm}.

The last section introduces the concept of projectivity for dynamical systems and attempts to justify the definition of the homology theory for Smale spaces by drawing a parallel with sheaf cohomology. The main result here is that the projective cover of a Smale space can be defined as a certain projective limit over the symbolic presentations for the given space. The rigorous statement is found in Theorem \ref{thm:proj}.

Most of the conventions and notations in this paper are taken directly from \cite{put:HoSmale}. No attempt is made to put the results in broader context or expand on detail. For these reasons the reader is advised to have a copy of Putnam’s \emph{A Homology Theory for Smale Spaces} \cite{put:HoSmale} handy.

\addtocontents{toc}{\SkipTocEntry}\section*{Acknowledgements}

My gratitude goes to Ian F. Putnam for many stimulating conversations and for the warm hospitality I received during my stay at the Department of Mathematics and Statistics of the University of Victoria. I would also like to thank Ryszard Nest for proposing to look into projective resolutions.

\section{Preliminaries}
\label{sec:prelims}

A Smale space $(X, \phi)$ is a dynamical system consisting of a homeomorphism $\phi$ on a compact metric space $(X,d)$ such that the space is locally the product of a coordinate that
contracts under the action of $\phi$ and a coordinate that expands under the action of $\phi$. The precise definition requires the definition of a bracket map satisfying certain axioms \cite{put:HoSmale,ruelle:thermo}. 

The most essential feature of Smale spaces is given by the definition of two equivalence relations, named respectively \emph{stable} and \emph{unstable}, which reads as follows:
\begin{itemize}
\item given $x,y\in X$, we say they are \emph{stably equivalent} if
\[
\lim_{n\to +\infty} d(\phi^{n}(x),\phi^{n}(y)) = 0;
\]
\item given $x,y\in X$, we say they are \emph{unstably equivalent} if
\[
\lim_{n\to +\infty} d(\phi^{-n}(x),\phi^{-n}(y)) = 0.
\]
\end{itemize}

The orbit of $x\in X$ under the stable (respectively unstable) equivalence relation is called the \emph{global stable} (resp. \emph{unstable}) set and is denoted $X^s(x)$ (resp. $X^u(x)$). Given a small enough $\epsilon >0$, \emph{local} stable and unstable sets are also defined, and they are denoted respectively $X^s(x,\epsilon)$ and $X^u(x,\epsilon)$. They provide the local product structure in the following sense: each $x\in X$ admits an open neighborhood which is homeomorphic (via the bracket map) to the product $X^u(x,\epsilon)\times X^s(x,\epsilon)$. Local and global sets are related through the following identities:
\begin{align*}
X^s(x)=&\bigcup_{n\geq 0} \phi^{-n}(X^s(\phi^n(x),\epsilon))\\
X^u(x)=&\bigcup_{n\geq 0} \phi^{n}(X^s(\phi^{-n}(x),\epsilon)).
\end{align*}

Let $(X,\phi)$ be a Smale space. We will assume that $(X,\phi)$ is \emph{non-wandering}, so that there exists an \emph{$s/u$-bijective pair} $\pi=(Y,\psi,\pi_s,Z,\zeta,\pi_u)$ (see \cite[Section 2.6]{put:HoSmale} for this notion). Recall from \cite[Sections 2.5 and 2.6]{put:HoSmale} that we can assume $Y$ and $Z$ to be non-wandering, and also $\pi_s$ and $\pi_u$ to be finite-to-one. 

We define a subshift of finite type for each $L,M\geq 0$,
\begin{align*}
\Sigma_{L,M}(\pi)=\{(y_0,\dots,y_L,z_0,\dots,z_M)\mid& y_l\in Y, z_m\in Z,\\
 &\pi_s(y_l)=\pi_u(z_m), 0\leq l\leq L, 0\leq m\leq M\}.
\end{align*}
We have maps
\begin{align}\label{eq:deltalm}
\delta_l\colon  \Sigma_{L,M}&\to \Sigma_{L-1,M}\\\notag
\delta_{,m}\colon  \Sigma_{L,M+1}&\to \Sigma_{L,M}
\end{align}
which delete respectively entries $y_l$ and $z_m$. Theorem 2.6.13 in \cite{put:HoSmale} asserts that the maps $\delta_l$ are $s$-bijective and the maps $\delta_{,m}$ are $u$-bijective (these will be defined shortly). 

Given a subshift of finite type $\Sigma$, we can associate to it an abelian group, denoted $D^s(\Sigma)$, defined in \cite[Chapter 3]{put:HoSmale} (see also \cite{krieger:inv}). It will be called the (stable) \emph{dimension group} of $\Sigma$. This construction is covariant for $s$-bijective maps and contravariant for $u$-bijective maps \cite[Sections 3.4 and 3.5]{put:HoSmale}. We summarize here these definitions:

\begin{defi}
Let $f\colon (X,\phi)\to (Y,\psi)$ be a map of Smale spaces. Consider for each $x\in X$ the restrictions
\begin{align}\label{eq:sbij}
f\colon X^s(x)&\to Y^s(f(x))\\\label{eq:ubij}
f\colon X^u(x)&\to Y^u(f(x)).
\end{align}
\begin{itemize}
\item If \eqref{eq:sbij} is injective, we say that $f$ is \emph{$s$-resolving}. If it is injective and surjective, then we say $f$ is \emph{$s$-bijective}.
\item If \eqref{eq:ubij} is injective, we say that $f$ is \emph{$u$-resolving}. If it is injective and surjective, then we say $f$ is \emph{$u$-bijective}.
\end{itemize}
\end{defi}

\subsection{Dimension groups}

Let us start with the definition of Krieger's dimension groups.

\begin{defi}\label{def:dimgrp}
Let $(\Sigma, \sigma)$ be a subshift of finite type. For $e\in \Sigma$, consider the family of compact open subsets in the stable orbit $\Sigma^s(e)$ and denote it by $CO^s(\Sigma,\sigma,e)$. Define $CO^s(\Sigma,\sigma)=\cup_{e\in\Sigma} CO^s(\Sigma,\sigma,e)$. Let $\sim$ be the smallest equivalence relation such that, for $E,F\in CO^s(\Sigma,\sigma)$, we have 
\begin{itemize}
\item $E\sim F$ if $[E,F]=E,[F,E]=F$, assuming both sets are defined;
\item $E\sim F$ if and only if $\sigma(E)\sim \sigma(F)$.
\end{itemize}
We define $D^s(\Sigma,\sigma)$ (abbreviated $D^s(\Sigma)$) to be the free abelian group on the $\sim$-equivalences $[E]$, modulo the subgroup generated by $[E\cup F]-[E]-[F]$, where $E,F$ belong to $CO^s(\Sigma,\sigma)$ and $E\cap F=\emptyset$.
\end{defi}

There is a definition of $D^u(\Sigma,\sigma)$, which is left to the imagination of the reader, since it won't be used in the rest of this paper.

It is easy to see that, in the construction above, it is sufficient to consider clopens lying in the \emph{local} stable sets.

\begin{lemm}
Define a family of sets $CO^s_\epsilon(\Sigma,\sigma)$, composed of clopens $E\subseteq \Sigma^s(e,\epsilon)$ for some $e\in \Sigma$ and $\epsilon< 1/4$. Consider the abelian group $D^s_\epsilon(\Sigma,\sigma)$, defined as in Definition \ref{def:dimgrp}, but replacing $CO^s(\Sigma,\sigma)$ with $CO^s_\epsilon(\Sigma,\sigma)$. Then we have $D^s(\Sigma,\sigma)\cong D^s_\epsilon(\Sigma,\sigma)$.
\end{lemm}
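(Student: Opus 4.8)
The plan is to show that the inclusion of clopen families $CO^s_\epsilon(\Sigma,\sigma)\subseteq CO^s(\Sigma,\sigma)$ induces an isomorphism on dimension groups. First I would observe that a clopen $E\subseteq\Sigma^s(e,\epsilon)$ is, in particular, a compact open subset of the global stable orbit $\Sigma^s(e)$, since each local stable set is open in the inductive-limit topology on $\Sigma^s(e)$; hence $CO^s_\epsilon(\Sigma,\sigma)\subseteq CO^s(\Sigma,\sigma)$. Because $\sigma$ contracts the stable coordinate, one has $\sigma(\Sigma^s(e,\epsilon))\subseteq\Sigma^s(\sigma(e),\epsilon)$, so the shift relation never leads outside the local family; together with the fact that the bracket and disjoint-union relations defining $D^s_\epsilon$ are restrictions of those defining $D^s$, this shows that $\sim_\epsilon$ refines the restriction of $\sim$, and therefore the inclusion induces a well-defined homomorphism $\iota_*\colon D^s_\epsilon(\Sigma,\sigma)\to D^s(\Sigma,\sigma)$.

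For surjectivity I would decompose an arbitrary global generator into local pieces. Let $E\in CO^s(\Sigma,\sigma)$ with $E\subseteq\Sigma^s(e)$. Using the identity $\Sigma^s(e)=\bigcup_{n\geq 0}\sigma^{-n}(\Sigma^s(\sigma^n(e),\epsilon))$ and the compactness of $E$ in the inductive-limit topology, $E$ is covered by finitely many local stable sets $\Sigma^s(x_i,\epsilon)$, each of which is clopen in $\Sigma^s(e)$. Since $\Sigma$ is totally disconnected, I can refine this into a finite clopen partition $E=\bigsqcup_i E_i$ with $E_i\subseteq\Sigma^s(x_i,\epsilon)$, so that each $E_i\in CO^s_\epsilon(\Sigma,\sigma)$. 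The disjoint-union relation then yields $[E]=\sum_i[E_i]=\iota_*\bigl(\sum_i[E_i]_\epsilon\bigr)$, proving that $\iota_*$ is onto.

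The main obstacle is injectivity: one must rule out that the richer supply of relations in $D^s$ collapses local clopens that remain distinct in $D^s_\epsilon$. Concretely, an equality $[E]=[F]$ in $D^s$ with $E,F$ local is witnessed by a finite chain $E=G_0\sim G_1\sim\cdots\sim G_m=F$ whose intermediate terms $G_i$ may be genuinely global and whose steps may use the congruence in the backward direction $\sigma G\sim\sigma H\Rightarrow G\sim H$, producing non-local $G,H$ since $\sigma^{-1}$ expands the stable coordinate. The plan is to \emph{localize} the entire chain at once: decompose each $G_i$ into finitely many local pieces as above, and replace each elementary step by a combination of elementary steps among those pieces. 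The forward shift and the bracket relation — the latter being defined only at scale $<\epsilon$, where both arguments are already local — cause no difficulty; the delicate case is the backward congruence, where one must track how the local decomposition of $\sigma G$ pulls back. Establishing that such a simultaneous localization always exists, with a uniform shift exponent justified by compactness and the finiteness of the chain, is the technical heart of the argument, and it gives injectivity, whence $\iota_*$ is an isomorphism.
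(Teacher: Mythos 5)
Your setup of the map $\iota_*\colon D^s_\epsilon(\Sigma,\sigma)\to D^s(\Sigma,\sigma)$ and your surjectivity argument match the paper's: a compact $E\subseteq\Sigma^s(f)$ is partitioned into finitely many clopens each lying in a local stable set, using compactness of $E$ and continuity of the assignment $e\mapsto N(e)$ (the least $n$ with $e\in\sigma^{-n}(\Sigma^s(\sigma^n(f),\epsilon))$). That part is fine.

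The gap is in the injectivity step, which you explicitly defer (``establishing that such a simultaneous localization always exists \dots\ is the technical heart of the argument''). The missing idea is that no decomposition of the intermediate terms $G_i$ into pieces, and no tracking of how partitions pull back under the backward congruence, is needed at all. The second defining relation in Definition \ref{def:dimgrp} is an \emph{if and only if}: $E\sim F$ exactly when $\sigma(E)\sim\sigma(F)$. Hence for any $N$ you may replace a pair $E,F$ by $\sigma^N(E),\sigma^N(F)$ without changing whether they are equivalent, and the same continuity--compactness argument used for surjectivity shows that $\sigma^{N}(E)\subseteq\Sigma^s(\sigma^{N}(f),\epsilon)$ for $N\geq N(E)$, i.e.\ the forward shift contracts the \emph{whole} compact set into a \emph{single} local stable set. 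Since both elementary relations are $\sigma$-equivariant ($[\sigma E,\sigma F]=\sigma[E,F]$ where defined, and the shift relation trivially), applying one sufficiently large uniform power $\sigma^N$ to every set appearing in a finite witnessing chain $E=G_0\sim\cdots\sim G_m=F$ turns it into a chain of elementary relations entirely among sets in $CO^s_\epsilon(\Sigma,\sigma)$; in particular the backward-congruence steps you worry about become harmless. This one observation is the paper's proof of the second half of the lemma, and it is what your proposal needs to supply in place of the deferred ``simultaneous localization.''
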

\begin{proof}
Given $E\in CO^s(\Sigma,\sigma), E\subseteq \Sigma^s(f)$, there is a well-defined function $E\to \N$, defined assigning to $e\in E$ the minimum number $N(e)$ such that $e_n=f_n$ whenever $n\geq N(e)$. In other words, $N(e)$ is the minimum natural number such that 
\[
e\in \sigma^{-N(e)}(\Sigma^s(\sigma^{N(e)}(f),\epsilon)).
\]
By definition $E\cap \sigma^{-n}(\Sigma^s(\sigma^{n}(f),\epsilon))$ is clopen for each $n\in \N$, which implies the assignment $e\mapsto N(e)$ is continuous. Since $E$ is compact, there is $N(E)\in \N$ such that
\[
E\subseteq \sigma^{-N(E)}(\Sigma^s(\sigma^{N(E)}(f),\epsilon)).
\]
Therefore $E$ can be partitioned in a finite number of disjoint clopens $E_i$ with $E_i \in CO^s_\epsilon(\Sigma,\sigma)$. We conclude $[E]\in D^s_\epsilon(\Sigma,\sigma)$. All is left to show is the equivalence relation defining $D^s(\Sigma,\sigma)$ is determined within the clopens in $CO^s_\epsilon(\Sigma,\sigma)$. Let $E\sim F$ be sets in $CO^s(\Sigma,\sigma)$ and take $N$ to be the maximum between $N(E)$ and $N(F)$. By definition $E\sim F$ if and only if $\sigma^N(E)\sim \sigma^N(F)$, and of course $\sigma^N(E)$ and $\sigma^N(F)$ belong to local stable sets. This completes the proof.
\end{proof}

A consequence of the previous lemma is that we can illustrate the definition of dimension group by a simple figure (Figure \ref{fig:dimg}). 

\begin{figure}
\centering
\begin{tikzpicture}[font=\scriptsize]
\draw[step=5mm,gray,very thin] (0,0) grid (2,2);
\draw[->] (0,0) -- (2.2,0) node[anchor=north west] {$\Sigma^u(x)$};
\draw[->] (0,0) -- (0,2.2) node[anchor=north east] {$\Sigma^s(x)$};
\fill[black] (0mm,0mm) circle[radius=1.1pt] node[anchor=north east] {$x$};
\draw[-,thick,color=blue] (0.5,0.5) -- (0.5,1.5) node[anchor=north east] {\textcolor{black}{$E$}};
\draw[-,thick,color=red] (1.5,0.5) -- (1.5,1.5) node[anchor=north west] {\textcolor{black}{$F$}};
\fill[fill opacity=0.15,fill=yellow]  (0.5,0.5) -- (0.5,1.5) -- (1.5,1.5) -- (1.5,0.5) -- cycle;
\draw (5mm,1pt) -- (5mm,-1pt) node[anchor=north] {$e$};
\draw (15mm,1pt) -- (15mm,-1pt) node[anchor=north] {$f$};
\draw (1,1) node[above,font=\normalsize,] {$\sim$};
\end{tikzpicture}
\caption[Dimension groups]{In this figure, $E$ and $F$ are compact opens in $CO^s_\epsilon(\Sigma,\sigma)$, with $E\subseteq \Sigma^s(e,\epsilon)$ and $F\subseteq \Sigma^s(f,\epsilon)$. The shaded area in yellow indicates that $[E,F]=E,[F,E]=F$ and therefore $E$ and $F$ are equivalent sets.}\label{fig:dimg}
\end{figure}
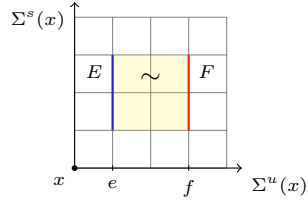

When $(\Sigma,\sigma)$ is non-wandering we can simplify the computation of the dimension group even further, because we can decompose $\Sigma$ in basic pieces as follows (see \cite[Theorem 2.1.13]{put:HoSmale}). 

\begin{theo}\label{thm:decomp}
Given a non-wandering Smale space $(X,\phi)$, there are closed pairwise disjoint sets $X_1,\dots,X_n$ and a permutation $\alpha\in S_n$ such that $\phi(X_i)=X_{\alpha(i)}$ for all $i=1,\dots,n$. Moreover, for any $i$ and $k$ such that $\alpha^k(i)=i$, the system $(X_i,\phi^k)$ is a mixing Smale space.
\end{theo}

Since the stable and unstable orbits are the same for $(X,\phi)$ and $(X,\phi^k)$, it is a simple matter to see that, applying the previous theorem to $(\Sigma,\sigma)$, we get a decomposition
\begin{equation}\label{eq:ds}
D^s(\Sigma) \cong D^s(\Sigma_1)\oplus\cdots\oplus D^s(\Sigma_n),
\end{equation}
(see also \cite[Section 2]{put:funct}).

\begin{rema}
In this paper we consider the dimension group merely as a group-invariant, without keeping track of the positive cone and of the induced automorphism (for more details, see \cite[Chapter 7]{lind:marcus}). Since the decomposition in \eqref{eq:ds} holds at the level of $C^*$-algebras, the positive cones decompose along the same shape. The induced automorphism (which also exists at the $C^*$-level) permutes the summands according to $\alpha$ as in Theorem \ref{thm:decomp}.
\end{rema}

In view of the preceding discussion, for the rest of this subsection we assume that $(\Sigma,\sigma)$ is mixing, in particular the global stable sets are dense.

\begin{lemm}
Let $f\in\Sigma$ and define $CO^s_{f}(\Sigma,\sigma)=\{ E \in CO^s_\epsilon(\Sigma,\sigma) \mid E \subseteq \Sigma^s(f)\}$. Consider the abelian group $D^s_f(\Sigma,\sigma)$, defined as in Definition \ref{def:dimgrp}, but replacing $CO^s(\Sigma,\sigma)$ with $CO^s_f(\Sigma,\sigma)$. Then we have $D^s(\Sigma,\sigma)\cong D^s_f(\Sigma,\sigma)$.
\end{lemm}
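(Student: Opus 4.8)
The plan is to show that the inclusion $CO^s_f(\Sigma,\sigma)\subseteq CO^s_\epsilon(\Sigma,\sigma)$ induces an isomorphism of dimension groups. It produces a homomorphism $\iota\colon D^s_f(\Sigma,\sigma)\to D^s_\epsilon(\Sigma,\sigma)$, and since $D^s_\epsilon(\Sigma,\sigma)\cong D^s(\Sigma,\sigma)$ by the preceding lemma, it suffices to prove that $\iota$ is an isomorphism. Both directions are handled by a single geometric device: \emph{sliding} a clopen along the unstable direction into $\Sigma^s(f)$ by means of the bracket, which is available precisely because mixing makes $\Sigma^s(f)$ dense.

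First I would construct the slide. Given $E\in CO^s_\epsilon(\Sigma,\sigma)$ with $E\subseteq\Sigma^s(e,\epsilon)$, density of $\Sigma^s(f)$ furnishes a point $g\in\Sigma^s(f)$ with $d(e,g)$ small enough that $[g,x]$ is defined for every $x\in E$. Set $E'=\{[g,x]\mid x\in E\}$. Since $[g,x]\in\Sigma^s(g,\epsilon)\cap\Sigma^u(x,\epsilon)$ and $g\in\Sigma^s(f)$, we get $E'\subseteq\Sigma^s(f)$, so $E'\in CO^s_f(\Sigma,\sigma)$. Using the bracket identities $[x,[g,x]]=[x,x]=x$ and $[[g,x],x]=[g,x]$ one checks that the matched pairs $(x,[g,x])$ witness $[E,E']=E$ and $[E',E]=E'$, so $E\sim E'$ in $D^s(\Sigma,\sigma)$. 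Hence $[E]=\iota[E']$ and $\iota$ is surjective.

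To obtain the inverse I would define $\Phi[E]=[E']$ and verify that it descends to a homomorphism $D^s(\Sigma,\sigma)\to D^s_f(\Sigma,\sigma)$. Independence of the auxiliary point $g$ follows from $[g',[g,x]]=[g',x]$, which shows that sliding by $g$ and then by $g'$ equals sliding by $g'$; additivity is clear because $x\mapsto[g,x]$ is injective; and compatibility with the bracket relation uses the local product structure: if $x\in\Sigma^u(y,\epsilon)$ then $[g,x]$ and $[g,y]$ both lie in the singleton $\Sigma^s(g,\epsilon)\cap\Sigma^u(x,\epsilon)$, so $[g,x]=[g,y]$ and bracket-equivalent clopens slide to the same set. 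One then checks $\Phi\circ\iota=\mathrm{id}$ and $\iota\circ\Phi=\mathrm{id}$, using that a clopen already contained in $\Sigma^s(f)$ is bracket-equivalent to its slide through a chain that stays inside $\Sigma^s(f)$.

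The main obstacle is compatibility of $\Phi$ with the $\sigma$-relation, because $\sigma$ does not preserve $\Sigma^s(f)$: in general $\sigma f$ is not stably equivalent to $f$. The way around this is to introduce the endomorphism $\Psi$ of $D^s_f(\Sigma,\sigma)$ that applies $\sigma$ and then slides the result back into $\Sigma^s(f)$ (again by density); its inverse is the analogous construction with $\sigma^{-1}$, so $\Psi$ is an automorphism. Using the equivariance $\sigma[g,x]=[\sigma g,\sigma x]$ together with $[h,[\sigma g,\sigma x]]=[h,\sigma x]$, one verifies $\Phi[\sigma E]=\Psi\,\Phi[E]$; since $\Psi$ is injective, the relation $\{(E,F)\mid\Phi[E]=\Phi[F]\}$ is $\sigma$-invariant, which is exactly what is needed for $\Phi$ to factor through the smallest $\sigma$-invariant equivalence relation of Definition \ref{def:dimgrp}. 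The only remaining care is the bookkeeping of the bracket constant so that all the slides above are simultaneously defined, which is guaranteed by the uniform bound $\epsilon<1/4$.
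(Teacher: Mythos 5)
Your proof is correct and rests on the same key construction as the paper's: sliding a clopen $E\subseteq\Sigma^s(e,\epsilon)$ onto $\Sigma^s(f)$ via the bracket, $F=[g,E]$ with $g\in\Sigma^s(f)$ supplied by density, so that $[E,F]=E$ and $[F,E]=F$. The paper stops once every class is shown to have such a representative, whereas you additionally verify that the slide descends to a well-defined inverse homomorphism (handling the $\sigma$-relation via the automorphism $\Psi$), which makes explicit the injectivity half that the paper leaves implicit.
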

\begin{proof}
Given $E\in CO^s_\epsilon(\Sigma,\sigma)$, it is sufficient to prove $[E]=[F]$ for some $F\in CO^s_f(\Sigma,\sigma)$. Suppose $E\subseteq \Sigma^s(e,\epsilon)$ and let $\Sigma(e,\epsilon)$ denote the open ball centered at $e$ of radius $\epsilon$. Note that $\Sigma^s(e,\epsilon)\subseteq \Sigma(e,\epsilon)$. Since $\Sigma^s(f)$ is dense, we can find $f^\prime \in \Sigma^s(f)\cap \Sigma(e,\epsilon)$ and define $F=[f^\prime,E]$. The basic properties of the bracket imply $[E,F]=E,[F,E]=F$.
\end{proof}

\begin{rema}
It is clear that $CO^s_{f}(\Sigma,\sigma)$ gives a basis for the topology of $\Sigma^s(f)$. Thus $D^s(\Sigma)$ is generated by equivalence classes of basic clopens in some global stable~set.
\end{rema}

Let $R^u(\Sigma,f)$ be the set of pairs of unstably equivalent points which belong to the stable orbit through $f\in \Sigma$. This is an amenable, étale groupoid when endowed with the topology as in \cite[Section 1.2]{thomsen:smale} (see also \cite[Theorem 3.6]{put:spiel}).

\begin{rema}\label{rem:amen}
In \cite[page 14]{put:algSmale} the question arises if stable and unstable equivalence relations of any mixing Smale space are locally compact amenable groupoids. The answer is positive and the proof is as follows: by \cite[Corollary 3.8]{deeley:strung}, in the equivalence class (in the sense of \cite{murewi:morita}) of such equivalence relations we can find étale amenable groupoids, because their corresponding $C^*$-algebras have finite nuclear dimension (see \cite[Theorem 5.6.18]{brown:ozawa}). Amenability is invariant under this sort of equivalence by \cite[Theorem 2.2.17]{renroch:amgrp}.
\end{rema}

A subbase for the topology on $R^u(\Sigma,f)$ is given by triples $(E,F,\gamma)$ where $E,F$ are basic clopens of the unit space $\Sigma^s(f)$ and $\gamma\colon E\to F$ is homeomorphism such that $(e,\gamma(e))\in R^u(\Sigma,f)$ for all $e\in E$.
We consider the following ``categorification'' of $R^u(\Sigma,f)$: define a category $\Cc(\Sigma,f)$ whose objects are the clopens in $CO^s_{f}(\Sigma,\sigma)$ and morphisms $E\to F$ are inclusions $E\hookrightarrow F$ and triples $(E,F,\gamma)$ as above.

Recall that the $K$-theory $K_0(\Cc)$ of an additive category $(\Cc,\oplus)$ is the abelian group generated by isomorphism classes $[E]$ of objects $E\in \Cc$ subject to the relation $[E\oplus F]=E+F$. If we interpret isomorphism classes as $(E,F,\gamma)$-orbits in $\Cc(\Sigma,f)$, and we take $E\oplus F$ to mean $E\cup F,E\cap F=\emptyset$, then we obtain a well-defined abelian group $K_0(\Cc(\Sigma,f))$.

\begin{rema}
Note the condition $E\oplus F$ is completely determined by inclusions. Indeed unions and intersections are specific colimits and limits in $\Cc(\Sigma,f)$.
\end{rema}

\begin{theo}\label{thm:dimCgroup}
We have $D^s(\Sigma)\cong K_0(\Cc(\Sigma,f))$ for any $f\in\Sigma$.
\end{theo}
\begin{proof}
Let us take $E$ and $F$ such that $[E]=[F]$. In particular there is $n\in\N$ and $f\in \sigma^{-n}(F)$ such that $[f,\sigma^{-n}(E)]=\sigma^{-n}(F)$. It is easy to see that the map 
\begin{equation}\label{eq:gamma}
\gamma(e)=\sigma^{n}([f,\sigma^{-n}(e)])
\end{equation}
is a homeomorphism of $E$ onto $F$, and obvioulsy $\sigma^{-n}(e)$ belongs to the local unstable set of $\sigma^{-n}(\gamma(e))$, therefore $(e,\gamma(e))\in R^u(\Sigma,f)$. 

Conversely, if $(E,F,\gamma)$ is an isomorphism in $\Cc(\Sigma,f)$, then by \cite[Lemma 4.14]{thomsen:smale} (and compactness), we can partitition $E$ in a finite number of clopens $E_1,\dots,E_n$, and correspondingly $F$ in $F_1,\dots,F_n$, where $E_i$ is homeomorphic to $F_i$ through a map in the form of \eqref{eq:gamma}. Therefore $[E_i]=[F_i]$ and by the defining relation $[E]=[F]$.
\end{proof}

If $\chi_E$ is the indicator function of the clopen $E$ inside the groupoid $C^*$-algebra $C^*(R^u(\Sigma,f))$, then a little thinking over the assignment $E\mapsto \chi_E$ gives the following well-known result (for more details see \cite[Section 4.3]{thomsen:smale}).

\begin{coro}\label{cor:dimcalg}
There is an isomorphism 
\[
K_0(C^*(R^u(\Sigma,f)))\cong K_0(\Cc(\Sigma,f)) \cong D^s(\Sigma)
\]
for any $f\in \Sigma$.
\end{coro}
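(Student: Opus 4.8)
The plan is to concentrate on the first isomorphism, since the identification $K_0(\Cc(\Sigma,f))\cong D^s(\Sigma)$ is precisely the content of Theorem \ref{thm:dimCgroup}. The groupoid $R^u(\Sigma,f)$ is étale and amenable with totally disconnected unit space $\Sigma^s(f)$, so by the remarks preceding the statement its full and reduced $C^*$-algebras agree and its $K$-theory is unambiguous. The goal is to show that the assignment $E\mapsto\chi_E$ induces the stated isomorphism on $K_0$.

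First I would record that each clopen $E\in CO^s_f(\Sigma,\sigma)$ determines a projection $\chi_E\in C_c(R^u(\Sigma,f))\subseteq C^*(R^u(\Sigma,f))$, being the indicator of a compact open subset of the open unit space, and that disjoint unions correspond to orthogonal sums, $\chi_{E\sqcup F}=\chi_E+\chi_F$. Each morphism $(E,F,\gamma)$ of $\Cc(\Sigma,f)$ is the graph of $\gamma$, a compact open bisection of $R^u(\Sigma,f)$; its indicator function $v_\gamma$ is a partial isometry satisfying $v_\gamma^\ast v_\gamma=\chi_E$ and $v_\gamma v_\gamma^\ast=\chi_F$, so $\chi_E$ and $\chi_F$ are Murray--von Neumann equivalent. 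Since isomorphisms in $\Cc(\Sigma,f)$ thereby collapse to a single $K_0$-class and disjoint unions add, the rule $E\mapsto[\chi_E]$ extends to a well-defined homomorphism $\Phi\colon K_0(\Cc(\Sigma,f))\to K_0(C^*(R^u(\Sigma,f)))$.

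To invert $\Phi$ I would exploit the AF structure. As $R^u(\Sigma,f)$ is an increasing union of compact open principal subgroupoids over the Cantor-type unit space $\Sigma^s(f)$, the algebra $C^*(R^u(\Sigma,f))$ is AF, and its $K_0$ is the inductive limit of the $K_0$-groups of the finite-dimensional subalgebras attached to this filtration. Every projection is then equivalent to a sum of minimal projections of the filtration, each conjugate to some $\chi_E$, which gives surjectivity of $\Phi$. For injectivity, if $[\chi_E]=[\chi_F]$ in $K_0$, then (after a stabilization absorbed by the groupoid) the two projections are Murray--von Neumann equivalent; a partial isometry realizing the equivalence can, by compactness and total disconnectedness, be cut into finitely many pieces supported on compact open bisections, and these pieces reassemble into morphisms of $\Cc(\Sigma,f)$ witnessing $[E]=[F]$.

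The main obstacle will be the inverse direction: controlling arbitrary projections and partial isometries in the completed $C^*$-algebra by compactly supported, bisection-valued data. This is exactly where the total disconnectedness of $\Sigma^s(f)$ and the AF structure intervene, since projections in an AF-algebra are perturbation-equivalent to ones living in the finite stages of the filtration, allowing the whole problem to be reduced to the clopen-and-bisection combinatorics already encoded in $\Cc(\Sigma,f)$. These details are standard and are carried out in \cite[Section 4.3]{thomsen:smale}.
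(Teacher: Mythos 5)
Your proposal is correct and follows essentially the same route as the paper, which likewise defines the map by $E\mapsto\chi_E$, observes that the morphisms $(E,F,\gamma)$ yield Murray--von Neumann equivalences via indicator functions of their graphs, and defers the remaining AF-algebra bookkeeping to \cite[Section 4.3]{thomsen:smale}. The extra detail you supply (bisections as partial isometries, reduction of arbitrary projections to the clopen combinatorics) is exactly the ``little thinking'' the paper alludes to.
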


\begin{rema}
As was already implicitly noted in Remark \ref{rem:amen}, the reason why the choice of $f\in \Sigma$ doesn't affect the $K$-theory group is to be found in the statement that reducing a groupoid to a transversal preserves its equivalence class, as explained in more detail in \cite[Example 2.7]{murewi:morita}.
\end{rema}

\subsection{Complexes}

The maps in \eqref{eq:deltalm} will induce group morphisms denoted respectively $\delta_l^s,\delta_{,m}^{s*}$.
For each $L,M\geq 0$, we consider maps
\begin{align}\label{eq:cc}
\partial^s_{L,M}\colon D^s(\Sigma_{L,M}(\pi))&\to D^s(\Sigma_{L-1,M}(\pi))\\\notag
\partial^s_{L,M}&=\sum_{0\leq l\leq L} (-1)^{l}\delta_l^s\\\label{eq:coc}
\partial^{s*}_{L,M}\colon D^s(\Sigma_{L,M}(\pi))&\to D^s(\Sigma_{L,M+1}(\pi))\\\notag
\partial^s_{L,M}&=\sum_{0\leq m\leq M+1} (-1)^{L+m}\delta_{,m}^{s*}.
\end{align}
It is clear from the definition that
\[
\partial^s_{L,M+1}\circ\partial^{s*}_{L,M}=\partial^{s*}_{L-1,M}\circ\partial^s_{L,M}.
\]
Furthermore, by applying \cite[Theorems 2.6.11, 2.6.12, 4.1.14]{put:HoSmale}, we have that
\begin{itemize}
\item for each  $M\geq 0$, \eqref{eq:cc} is a chain complex;  
\item for each $L\geq 0$, \eqref{eq:coc} is a cochain complex.
\end{itemize}

Altogether, we have a double complex $(C^s(\pi)_{\bullet,\bullet},\partial^s,\partial^{s*})$, where 
\begin{align*}
C^s(\pi)_{L,M}=
\begin{cases}
D^s(\Sigma_{L,M}(\pi)) & \text{if $L\geq 0$ and $M\geq 0$}\\
0 & \text{else}.
\end{cases}
\end{align*}

The \emph{totalization} of this complex is the chain complex $(\Tot(C^s(\pi))_\bullet,d^s)$, where
\begin{align*}
\Tot(C^s(\pi))_N&=\bigoplus_{L-M=N} C^s(\pi)_{L,M}\\
d^s_{L,M}&=\partial^s_{L,M}+\partial^{s*}_{L,M}\\
d^s_N=\bigoplus_{L-M=N}d^s_{L,M}\colon &\Tot(C^s(\pi))_N\to \Tot(C^s(\pi))_{N-1}.
\end{align*}

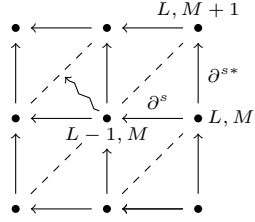
\begin{figure}
\centering
\begin{tikzpicture}[font=\scriptsize]
\fill[black] (24mm,24mm) circle[radius=1.5pt] node[above] {$L,M+1$};
\fill[black] (24mm,0mm) circle[radius=1.5pt] node[above] {};
\fill[black] (24mm,12mm) circle[radius=1.5pt] node[right] {$L,M$};
\fill[black] (12mm,24mm) circle[radius=1.5pt] node[above] {};
\draw[->] (2.4,1.4)-- (2.4,2.2) node[anchor=north west, yshift=-2mm] {$\partial^{s*}$};
\draw[->] (2.4,0.2)-- (2.4,1.0) node[anchor=north west, yshift=-2mm] {};
\draw[->] (0,0.2)-- (0,1.0) node[anchor=north west, yshift=-2mm] {};
\draw[->] (2.2,1.2)-- (1.4,1.2) node[anchor= south west, xshift=2mm] {$\partial^s$};
\draw[->] (1.0,2.4)-- (0.2,2.4) node[anchor= south west, xshift=2mm] {};
\draw[->] (2.2,2.4)-- (1.4,2.4) node[anchor= south west, xshift=2mm] {};
\draw[->] (1.0,1.2)-- (0.2,1.2) node[anchor= south west, xshift=2mm] {};
\draw[->] (1.2,1.4)-- (1.2,2.2) node[anchor=north west, yshift=-2mm] {};
\draw[->] (2.2,0)-- (1.4,0) node[anchor= south west, xshift=2mm] {};
\draw[->] (1.2,0.2)-- (1.2,0.8) node[anchor=north west, yshift=-2mm] {};
\draw[->] (2.2,0)-- (1.4,0) node[anchor= south west, xshift=2mm] {};
\draw[->] (0,1.4)-- (0,2.2) node[anchor=north west, yshift=-2mm] {};
\draw[->] (1,0)-- (0.2,0) node[anchor= south west, xshift=2mm] {};
\fill[black] (12mm,0mm) circle[radius=1.5pt] node[below] {};
\fill[black] (12mm,12mm) circle[radius=1.5pt] node[below] {$L-1,M$};
\fill[black] (0mm,24mm) circle[radius=1.5pt] node[left] {};
\draw [dashed] (0.2,0.2)-- (0.9,0.9) node[anchor=south east] {};
\draw [dashed] (1.4,1.4)-- (2.2,2.2) node[anchor=south east] {};
\draw [dashed] (0.2,1.4)-- (1.0,2.2) node[anchor=south east] {};
\draw [dashed] (1.4,0.2)-- (2.2,1.0) node[anchor=south east] {};
\draw [->,
line join=round,
decorate, decoration={
    zigzag,
    segment length=8,
    amplitude=.9,post=lineto,
    post length=2pt
}]  (1.1,1.3) -- (0.65,1.75);
\fill[black] (0mm,12mm) circle[radius=1.5pt] node[left] {};
\fill[black] (0mm,0mm) circle[radius=1.5pt] node[left] {};
\end{tikzpicture}
\caption[The complexes $C^s(\pi)_{\bullet,\bullet}$ and $\Tot(C^s(\pi))_\bullet$]{A representation of the complexes $C^s(\pi)_{\bullet,\bullet}$ and $\Tot(C^s(\pi))_\bullet$. The direct sums of the groups lying on the dashed diagonals give $\Tot(C^s(\pi))_\bullet$. The differentials $d^s$ (e.g., the zigzag arrow in the top-left square) run from south-east to north-west, decreasing degree by $1$.}
\end{figure}

By slightly modifying the invariant $\Sigma\mapsto D^s(\Sigma)$, we can introduce a cochain complex which is related to \eqref{eq:coc}, and will give rise to another double complex. We summarize the details of this construction (see \cite[Definition 4.1.5]{put:HoSmale}):
\begin{itemize}
\item For any $L\geq 0$, the symmetric group $S_{M+1}$ acts by automorphisms (in particular, $s$-bijective maps) on $\Sigma_{L,M}(\pi)$. Define the group
\[
D^s_{,\A}(\Sigma_{L,M}(\pi))=\{ a\in D^s(\Sigma_{L,M}(\pi)) \mid  a=\text{sgn}(\beta)\beta(a)\text{ for all $\beta\in S_{M+1}$}\};
\]

\item By \cite[Lemma 5.1.6]{put:HoSmale}, we have
\begin{align*}
\partial^s_{L,M}D^s_{,\A}(\Sigma_{L,M}(\pi))&\subseteq D^s_{,\A}(\Sigma_{L-1,M}(\pi))\\
\partial^{s*}_{L,M}D^s_{,\A}(\Sigma_{L,M}(\pi))&\subseteq D^s_{,\A}(\Sigma_{L,M+1}(\pi));
\end{align*}
\item Define a bicomplex $(C_{,\A}^s(\pi)_{\bullet,\bullet},\partial^s,\partial^{s*})$ by setting
\[
C_{,\A}^s(\pi)_{L,M}=D^s_{,\A}(\Sigma_{L,M}(\pi));
\]
\item The inclusion map $J\colon D^s_{,\A}(\Sigma_{L,M}(\pi)) \to D^s(\Sigma_{L,M}(\pi))$ induces chain maps
\begin{align*}
(C_{,\A}^s(\pi)_{\bullet,\bullet},\partial^s,\partial^{s*}) &\to (C^s(\pi)_{\bullet,\bullet},\partial^s,\partial^{s*})\\
(\Tot(C_{,\A}^s(\pi))_\bullet,d^s)&\to (\Tot(C^s(\pi))_\bullet,d^s),
\end{align*}
and in particular, for each $L\geq 0$, a cochain map
\[
(C_{,\A}^s(\pi)_{L,\bullet},\partial^{s*}) \to (C^s(\pi)_{L,\bullet},\partial^{s*}).
\]
\end{itemize}

The advantage in using the complex just defined lies in the following propositions, proved in \cite[Theorem 4.2.12, Theorem 4.3.1]{put:HoSmale}
\begin{prop}\label{prop:boundM}
There is $N\geq 0$ such that $C_{,\A}^s(\pi)_{L,M}=0$ whenever $M\geq N$.
\end{prop}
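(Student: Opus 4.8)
The plan is to first extract, for large $M$, a combinatorial constraint forcing two of the $z$-coordinates to coincide, and then to annihilate any antisymmetric class by antisymmetrizing a function representative and invoking the torsion-freeness of Krieger's dimension groups.

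First I would fix the constant $N$. Since $\pi_u\colon Z\to X$ is continuous and finite-to-one between compact metric spaces, it is closed, and the fibre-counting function $w\mapsto \#\pi_u^{-1}(w)$ is upper semicontinuous on $X$, hence bounded by some $N\in\N$ independent of $L$. Now take $M\geq N$ and any point $(y_0,\dots,y_L,z_0,\dots,z_M)\in\Sigma_{L,M}(\pi)$. The defining constraints give $\pi_u(z_0)=\cdots=\pi_u(z_M)=\pi_s(y_0)=:w$, so the $M+1$ points $z_0,\dots,z_M$ all lie in the single fibre $\pi_u^{-1}(w)$, which has at most $N<M+1$ elements. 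By the pigeonhole principle there are indices $i<j$ with $z_i=z_j$; equivalently, the transposition $\tau=(i\,j)\in S_{M+1}$ fixes this point. Thus, for every $L$ and every $M\geq N$, each point of $\Sigma_{L,M}(\pi)$ is fixed by some transposition.

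Next I would realize an antisymmetric class by a genuinely alternating function. By Theorem \ref{thm:dimCgroup} and Corollary \ref{cor:dimcalg}, an element $a\in D^s_{,\A}(\Sigma_{L,M}(\pi))$ is represented by a compactly supported, locally constant $\Z$-valued function $\phi$ on a stable set, with the class map $\phi\mapsto[\phi]$ equivariant for the push-forward action $(\beta\cdot\phi)(q)=\phi(\beta^{-1}q)$ of $S_{M+1}$, which permutes the $z$-coordinates by $s$-bijective automorphisms. Replacing the stable orbit $\Sigma^s(f)$ by the finite $S_{M+1}$-invariant union $\bigcup_{\beta}\beta\,\Sigma^s(f)$, I set
\[
\psi=\sum_{\beta\in S_{M+1}}\text{sgn}(\beta)\,\beta\cdot\phi .
\]
A direct check gives $\beta\cdot\psi=\text{sgn}(\beta)\,\psi$ as functions, while $[\psi]=\sum_\beta \text{sgn}(\beta)\,\beta\cdot a=(M+1)!\,a$ by the defining relation $\beta\cdot a=\text{sgn}(\beta)\,a$. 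For any point $p$ of the invariant stable set, choosing a transposition $\tau$ with $\tau p=p$ as above yields $(\tau\cdot\psi)(p)=\psi(\tau^{-1}p)=\psi(p)$, whereas alternation gives $(\tau\cdot\psi)(p)=-\psi(p)$; as $\psi$ is integer-valued this forces $\psi(p)=0$. Hence $\psi\equiv0$, so $(M+1)!\,a=[\psi]=0$, and since Krieger's dimension groups are torsion-free (being $K_0$ of AF algebras, by Corollary \ref{cor:dimcalg}) we conclude $a=0$, i.e.\ $C_{,\A}^s(\pi)_{L,M}=0$ for all $M\geq N$.

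I expect the main obstacle to be bookkeeping rather than conceptual. The delicate point is that the loci $\{z_i=z_j\}$ are closed but not clopen, so one cannot simply split $\Sigma_{L,M}(\pi)$ along diagonals; the argument circumvents this by working pointwise on a function representative. Making this rigorous requires pinning down the functional model for $D^s$ on which $S_{M+1}$ acts by push-forward and, in particular, enlarging a single stable orbit (which $S_{M+1}$ need not preserve) to an invariant finite union of orbits so that $\psi$ is well defined; the remaining inputs, namely the uniform fibre bound and torsion-freeness, are standard.
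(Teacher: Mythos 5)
Your overall strategy is sound and genuinely different from the argument this note delegates to \cite[Theorem 4.2.12]{put:HoSmale}. Putnam's proof is ``clopen-set--theoretic'': it exploits the fact that fibres of a finite-to-one $u$-resolving map are \emph{uniformly separated} (a consequence of the results in \cite[Section 2.5]{put:HoSmale}), so that each locus $\{z_i=z_j\}$ is actually \emph{clopen} in $\Sigma_{L,M}(\pi)$ --- contrary to the worry you raise at the end --- and one can then argue directly on generators $[E]$ with $E$ contained in such a piece, where the transposition $(i\,j)$ acts trivially and antisymmetry forces $2[E]$-type cancellations. Your route instead works pointwise on a function representative: the identification $D^s(\Sigma)\cong K_0(C^*(R^u(\Sigma,f)))$ of Corollary \ref{cor:dimcalg} does present every class as $[\phi]$ with $\phi\in C_c(T,\Z)$ for a transversal $T$, enlarging $T$ to an $S_{M+1}$-invariant finite union of stable orbits is legitimate (Morita invariance of the reduction, as in Remark following Corollary \ref{cor:dimcalg}; one must also allow the action to permute the mixing components of $\Sigma_{L,M}(\pi)$, but this is bookkeeping), the computation $[\psi]=(M+1)!\,a$ and the pointwise vanishing of the alternating function $\psi$ are correct, and $D^s$ is indeed torsion-free as a direct limit of free abelian groups. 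What your approach buys is that it never needs the topology of the diagonals $\{z_i=z_j\}$, only the set-theoretic pigeonhole; what it costs is the detour through the functional model and the appeal to torsion-freeness, neither of which Putnam needs.

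There is, however, one step whose justification as written is wrong: the uniform bound $N$ on $\#\pi_u^{-1}(w)$. For a general continuous, closed, finite-to-one surjection of compact metric spaces the fibre-counting function is \emph{not} upper semicontinuous (the tent map on $[0,1]$ has fibres of size $2$ accumulating on a fibre of size $1$), and fibre cardinalities need not even be bounded (take the one-point compactification of a disjoint union of finite sets $F_n$ with $\#F_n=n$ mapping onto $\{0\}\cup\{1/n\}$). So compactness plus finite-to-oneness alone does not produce $N$. The uniform bound is nevertheless true here, but it is a theorem about \emph{resolving} factor maps of non-wandering Smale spaces (\cite[Section 2.5]{put:HoSmale}), not a soft topological fact; replace the semicontinuity argument by a citation to that result and the rest of your proof goes through.
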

\begin{prop}\label{prop:isochain}
For each, $L\geq 0$, the cochain map
\[
J\colon (C_{,\A}^s(\pi)_{L,\bullet},\partial^{s*}) \to (C^s(\pi)_{L,\bullet},\partial^{s*})
\]
is a quasi-isomorphism, i.e., for each $N\in \Z$ there are induced isomorphisms
\[
J_*\colon H_N(C_{,\A}^s(\pi)_{L,\bullet},\partial^{s*})\cong H_N(C^s(\pi)_{L,\bullet},\partial^{s*}).
\]
\end{prop}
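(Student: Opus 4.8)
The plan is to exhibit $(C^s(\pi)_{L,\bullet},\partial^{s*})$ as the cochain complex of a cosimplicial abelian group carrying a compatible symmetric-group action, and then to prove that restricting to the sign-isotypic part leaves the cohomology unchanged. Fixing $L$, I would first note that $M\mapsto\Sigma_{L,M}(\pi)$ is a simplicial shift of finite type in the $M$-direction: the deletions $\delta_{,m}$ of \eqref{eq:deltalm} serve as faces and the duplications $z_m\mapsto(z_m,z_m)$ as degeneracies. Since $D^s$ is contravariant for $u$-bijective maps and the $\delta_{,m}$ are $u$-bijective (Theorem 2.6.13), applying $D^s$ produces a cosimplicial abelian group $A^\bullet$ with $A^M=D^s(\Sigma_{L,M}(\pi))$ whose coface differential is exactly $\partial^{s*}$ as in \eqref{eq:coc}, the duplications furnishing the codegeneracies. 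The action of $S_{M+1}$ permuting the $z$-coordinates is by $s$-bijective automorphisms, hence acts covariantly and compatibly with the cosimplicial structure, making $A^\bullet$ a functor on the symmetric simplex category; then $A^\bullet_{,\A}=C^s_{,\A}(\pi)_{L,\bullet}$ is precisely its sign-alternating subcomplex, with $J$ the inclusion.

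Rationally the claim is immediate: the antisymmetriser $e=\tfrac{1}{(M+1)!}\sum_{\beta\in S_{M+1}}\mathrm{sgn}(\beta)\,\beta$ is an idempotent cochain endomorphism (it commutes with $\partial^{s*}$ by Lemma 5.1.6) projecting $A^\bullet\otimes\Q$ onto $A^\bullet_{,\A}\otimes\Q$, and the crossed-simplicial identity relating an adjacent transposition to the matching codegeneracy forces the complementary summand into the degenerate part, which is acyclic by Dold--Kan. The work therefore lies in the integral statement, and here I would reduce in two steps: first replace $A^\bullet$ by its conormalised subcomplex $NA^\bullet$, an honest integral direct summand with acyclic complement (the normalisation theorem); then compare $A^\bullet_{,\A}$ with $NA^\bullet$. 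Because an alternating element is anti-invariant under every adjacent transposition $\tau_m$, the relation $\sigma^m\tau_m=\sigma^m$ yields $2\,\sigma^m a=0$, so $A^\bullet_{,\A}$ lands in $NA^\bullet$ only up to $2$-torsion. A convenient simplification is that both complexes are bounded, since all $z_m$ lie in the single finite fibre $\pi_u^{-1}(\pi_s(y_0))$ and hence $NA^M=0=A^M_{,\A}$ once $M$ exceeds the (uniform) fibre cardinality, matching Proposition \ref{prop:boundM}.

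The hard part will be upgrading the rational splitting to an integral quasi-isomorphism, that is, controlling the torsion that obstructs $e$ from being defined over $\Z$. My proposal is to trade the idempotent for an explicit chain homotopy built from the codegeneracies and the transposition action, and to run a descending induction on $M$ anchored by the boundedness above: in degrees beyond the fibre cardinality both complexes vanish, and the crossed-simplicial relations should yield a contracting homotopy for the quotient $NA^\bullet/A^\bullet_{,\A}$ that propagates the isomorphism downward one degree at a time, exhibiting $A^\bullet_{,\A}$ as an integral deformation retract of $NA^\bullet\simeq A^\bullet$. I expect the genuine difficulty to be purely algebraic, namely assembling the homotopy so that the $2$- and higher-torsion terms cancel without dividing by $(M+1)!$, rather than dynamical. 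Finally, the companion assertion for the $D^u$-flavoured complexes (the content of Theorem 4.3.1) follows by the mirror-image argument, interchanging the roles of the stable and unstable structures.
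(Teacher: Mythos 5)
Your setup --- viewing $M\mapsto D^s(\Sigma_{L,M}(\pi))$ as a cosimplicial abelian group with a compatible $S_{M+1}$-action and comparing the alternating part with the conormalization --- is exactly the simplicial philosophy of Section \ref{sec:sim}. Be aware, however, that the paper does not actually prove Proposition \ref{prop:isochain}: it imports it from Putnam's memoir (Theorems 4.2.12 and 4.3.1) and only re-derives the dual, quotient-type statement (Theorem \ref{thm:degthm}) by its own methods. The engine of that argument is a bounded filtration of the \emph{enlarged} degenerate subcomplex (degenerate chains \emph{plus} the span of $a-\text{sgn}(\alpha)\alpha(a)$) together with an explicit chain contraction $(-1)^p(\sigma_n s_{k+1})^s$ on each filtration quotient. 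Your proposal never reaches the analogous point: the integral statement --- which is the entire content of the proposition --- is left as a plan (``trade the idempotent for an explicit chain homotopy'', ``I expect the genuine difficulty to be purely algebraic''). No homotopy is constructed, so nothing is proved.

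Two of the steps you do commit to are also problematic. First, even rationally it is not true that the complement $(1-e)A^\bullet$ of the antisymmetrizer sits inside the degenerate part: the crossed-simplicial identity only shows that degenerate elements are invariant under the matching transposition, not that elements of the form $a-\text{sgn}(\beta)\beta(a)$ are degenerate. This is precisely why the paper must enlarge $\tilde{D}C$ to $DC$ before proving acyclicity, and that acyclicity is itself a nontrivial filtration-plus-contraction argument, not a consequence of the Dold--Kan normalization theorem. Second, the anchor of your descending induction --- that the plain conormalization $NA^M$ vanishes once $M$ exceeds the fibre cardinality of $\pi_u$ --- is unjustified and contradicts the paper's explicit remark that the non-symmetric Moore complexes ``don't allow for computational simplifications as in Proposition \ref{prop:boundM}''; only the symmetric reduction $C^s_{,\A}(\pi)_{L,M}$ is bounded in $M$. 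Without that vanishing and without the homotopy, your induction has neither a base case nor an inductive step, so the gap is not a technicality but the whole proof.
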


\begin{defi}
By considering the symmetric group action $S_{L+1}$ on $\Sigma_{L,M}(\pi)$, one can introduce another invariant
\begin{equation*}
D^s_{\mathcal{Q}}(\Sigma_{L,M}(\pi))=\frac{D^s(\Sigma_{L,M}(\pi))}{D^s_{\mathcal{B}}(\Sigma_{L,M}(\pi))},
\end{equation*}
where $D^s_{\mathcal{B}}(\Sigma_{L,M}(\pi))$ is the subgroup of $D^s(\Sigma)$ generated by
\begin{itemize}
\item all elements $a$ satisfying $\alpha(a)=a$ for some non-trivial transposition $\alpha$ in the symmetric group $S_{L+1}$;
\item all elements of the form $a-\text{sgn}(\alpha)\alpha(a)$, where $\alpha\in S_{L+1}$.
\end{itemize}
Associated to this invariant is the bicomplex denoted $C_{\mathcal{Q}}^s(\pi)$ in \cite[Chapter 5]{put:HoSmale}. This complex enjoys the analogous property of Proposition \ref{prop:boundM}, i.e., it is zero outside a bounded region in the $L$-direction.

By combining both approaches, one can also introduce a fourth bicomplex, denoted $C_{\mathcal{Q},\A}^s(\pi)$ and based on the following ``dimension group'':
\[
D^s_{\mathcal{Q,\A}}(\Sigma_{L,M}(\pi))=\frac{D^s_{\mathcal{,\A}}(\Sigma_{L,M}(\pi))}{D^s_{\mathcal{B}}(\Sigma_{L,M}(\pi))\cap D^s_{\mathcal{,\A}}(\Sigma_{L,M}(\pi))}.
\]
This complex is zero outside a bounded rectangle of the first quadrant. Further details on these constructions are found in \cite[Definition 5.1.7]{put:HoSmale}. It is proved in \cite[Section 5.3]{put:HoSmale} that there are quasi-isomorphisms
\[
C_{,\A}^s(\pi)\to C_{\mathcal{Q},\A}^s(\pi)\to C_{\mathcal{Q}}^s(\pi).
\]
These results and constructions will be obtained through different methods in the next sections of this paper.

By definition, the (stable) \emph{homology groups of $(X,\phi)$} are given by ($N\in \Z$)
\[
H_N^s(X,\phi)=H_N(\Tot(C^s_{\mathcal{Q},\A}(\pi))_\bullet,d^s).
\]
It is proved in \cite[Section 5.5]{put:HoSmale} that this definition does not depend on the particular choice of $s/u$-bijective map $\pi$.
\end{defi}

\section{\texorpdfstring{$C_{,\A}^s(\pi)$ is quasi-isomorphic to $C^s(\pi)$}{Quasi-isomorphic bicomplexes}}\label{sec:ccc}

We are going to prove in this section that $C_{,\A}^s(\pi)$ is quasi-isomorphic to $C^s(\pi)$. This is the missing (but conjectured) result from Putnam's memoir \cite[page 90]{put:HoSmale}. For brevity, we write $C=C^s(\pi)$ and $C_\A=C^s(\pi)_{,\A}$.

There are at least two reasons why this quasi-isomorphism is important: firstly, it is clear that $C$ is the most straight-forward among the definable complexes for the homology of Smale spaces, and therefore it is a basic fundamental result that it's computing the same invariants as the other complexes. Secondly, and maybe more importantly, $C$ is also the complex with the most evident connection to $K$-theory for the associated $C^*$-algebras. Indeed, if we consider the $C^*$-morphisms induced by $\delta_{l}$ and $\delta_{,m}$ as explained in \cite{put:funct}, their corresponding $K$-theory maps agree with $\delta_l^s$ and $\delta_{,m}^{s*}$ after the identification given in Corollary \ref{cor:dimcalg}.

\subsection{Filtrations}
We proceed by defining the \emph{vertical filtration} on $C_{\bullet,\bullet}$, i.e., the family of subcomplexes given by ($p\in \Z$)
\begin{equation*}
F_pC_{L,M}=
\begin{cases}
C_{L,M} & \text{if $L\leq p$}\\
0 & \text{else},
\end{cases}
\end{equation*}
in other words everything to the right of the vertical line $L=p$ is set to zero, see Figure \ref{fig:filtr}.

\begin{figure}
\centering
\begin{tikzpicture}[font=\scriptsize]
\draw (24mm,24mm) node {0};
\draw (24mm,12mm) node {0};
\draw (24mm,0mm) node {0};
\fill[black] (12mm,24mm) circle[radius=1.5pt] node[above] {};
\draw[->] (0,0.2)-- (0,1.0) node[anchor=north east, yshift=-2mm] {$\partial^{s*}$};
\draw[->] (1.0,2.4)-- (0.2,2.4) node[anchor= south west, xshift=2mm] {};
\draw[->] (1.0,1.2)-- (0.2,1.2) node[anchor= south west, xshift=2mm] {};
\draw[->] (1.2,1.4)-- (1.2,2.2) node[anchor=north west, yshift=-2mm] {};
\draw[->] (1.2,0.2)-- (1.2,0.8) node[anchor=north west, yshift=-2mm] {};
\draw[->] (0,1.4)-- (0,2.2) node[anchor=north west, yshift=-2mm] {};
\draw[->] (1,0)-- (0.2,0) node[anchor= south west, xshift=2mm] {$\partial^s$};
\fill[black] (12mm,0mm) circle[radius=1.5pt] node[below] {$p,M$};
\fill[black] (12mm,12mm) circle[radius=1.5pt] node[below] {$p,M\hspace*{-0.4em}+\hspace*{-0.4em} 1$};
\fill[black] (0mm,24mm) circle[radius=1.5pt] node[left] {};
\draw [dashed] (0.2,0.2)-- (0.82,0.82) node[anchor=south east] {};
\draw [dashed] (1.4,1.4)-- (2.2,2.2) node[anchor=south east] {};
\draw [dashed] (0.2,1.4)-- (1.0,2.2) node[anchor=south east] {};
\draw [dashed] (1.4,0.2)-- (2.2,1.0) node[anchor=south east] {};
\draw [->,
line join=round,
decorate, decoration={
    zigzag,
    segment length=8,
    amplitude=.9,post=lineto,
    post length=2pt
}]  (1.1,1.3) -- (0.65,1.75);
\fill[black] (0mm,12mm) circle[radius=1.5pt] node[left] {};
\fill[black] (0mm,0mm) circle[radius=1.5pt] node[left] {};
\end{tikzpicture}
\caption[The vetical filtration]{The vetical filtration.}\label{fig:filtr}
\end{figure}
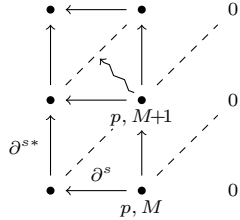

The resulting family $\{\Tot(F_pC)_\bullet\mid p\in \Z\}$ is a filtration of the totalization chain complex. Note there is a chain of inclusions
\begin{equation}\label{eq:filtr}
\cdots\subseteq\Tot(F_pC)_\bullet\subseteq\Tot(F_{p+1}C)_\bullet\subseteq\cdots.
\end{equation}

In complete analogy, we get a filtration
\begin{equation}\label{eq:filtrA}
\cdots\subseteq\Tot(F_pC_\A)_\bullet\subseteq\Tot(F_{p+1}C_\A)_\bullet\subseteq\cdots.
\end{equation}

The following remarks will be important in the next subsection.

\begin{rema}\label{rem:ex}
The filtration in \eqref{eq:filtr} is \emph{exhaustive}, i.e., the union over all $p$ of $\Tot(F_pC)_\bullet$ is $\Tot(C)_\bullet$. Note this implies the induced filtration on homology is also exhaustive. The same holds for \eqref{eq:filtrA}.
\end{rema}
\begin{rema}\label{rem:bb}
The filtration in \eqref{eq:filtr} is \emph{bounded below}, i.e., for each $N\in \Z$ there exists $s\in\Z$ such that $\Tot(F_sC)_N=0$. For $N\geq 0$ we can take $s=N-1$; when $N<0$ we take $s=-1$. Note this implies the induced filtration on homology is also bounded below. The same holds for \eqref{eq:filtrA}.
\end{rema}

\subsection{Spectral sequences}
A filtration of a chain complex gives rise to a spectral sequence, see \cite[Theorem 5.4.1]{wei:homalg} for a proof.

\begin{prop}\label{prop:spseq}
A filtration $\F$ of a chain complex $\Cc$ determines a spectral sequence:
\begin{align*}
E^0_{pq}&=F_p\Cc_{p+q}/F_{p-1}\Cc_{p+q}\\
E^1_{pq}&=H_{p+q}(E^0_{p\bullet}).
\end{align*}
\end{prop}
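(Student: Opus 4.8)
The plan is to carry out the classical construction of the spectral sequence of a filtered chain complex by means of \emph{approximate cycles} and \emph{approximate boundaries}; since the proposition only asserts existence of the spectral sequence together with the shape of its first two pages (convergence is addressed separately, via Remarks \ref{rem:ex} and \ref{rem:bb}), the proof reduces to assembling this data and identifying $E^0$ and $E^1$. Write $d$ for the differential of $\Cc$ and recall that each $F_p\Cc$ is a subcomplex. First I would introduce, for every $r\geq 0$, the groups
\[
Z^r_{p,q}=\{x\in F_p\Cc_{p+q} \mid dx\in F_{p-r}\Cc_{p+q-1}\},
\]
observing that $Z^0_{p,q}=F_p\Cc_{p+q}$ automatically (as $F_p$ is a subcomplex) and that these shrink toward the honest cycles inside $F_p$ as $r$ grows. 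The $r$-approximate boundaries are then $B^r_{p,q}=d\bigl(Z^{r-1}_{p+r-1,\,q-r+2}\bigr)$ for $r\geq 1$, with $B^0_{p,q}=0$; a quick degree count confirms that these land in total degree $p+q$.

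The first routine step is to verify the inclusions that make the pages well defined. Tracking filtration indices shows that both $Z^{r-1}_{p-1,q+1}$ and $B^r_{p,q}$ lie inside $Z^r_{p,q}$, so one may set
\[
E^r_{p,q}=\frac{Z^r_{p,q}}{Z^{r-1}_{p-1,q+1}+B^r_{p,q}}.
\]
Since $x\in Z^r_{p,q}$ forces $dx\in F_{p-r}\Cc$ with $d(dx)=0$, the differential $d$ restricts to a map $Z^r_{p,q}\to Z^r_{p-r,q+r-1}$. I would then check that it carries the denominator into the denominator, hence descends to $d^r\colon E^r_{p,q}\to E^r_{p-r,q+r-1}$, and that $(d^r)^2=0$ is immediate from $d^2=0$ (indeed $d^r$ lands in the appropriate approximate boundaries).

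The heart of the argument, and the step I expect to be the main obstacle, is the isomorphism $E^{r+1}_{p,q}\cong H\bigl(E^r,d^r\bigr)_{p,q}$, i.e. that the homology of each page with respect to $d^r$ reproduces the next page. This is established by a careful but standard diagram chase: a class in $\ker d^r$ is represented by some $x\in Z^r_{p,q}$ whose differential drops filtration by one extra unit after correcting the representative by a boundary, so that $x$ represents an element of $Z^{r+1}$ modulo the denominator; dually, $\operatorname{im} d^r$ is matched with the enlargement of $B^r$ to $B^{r+1}$. Equating numerators and denominators then yields $E^{r+1}_{p,q}$. The genuine difficulty is purely bookkeeping: keeping the four indices and the several nested subgroups straight throughout the identification.

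Finally I would specialize to recover the stated pages. With the conventions above one has $Z^{-1}_{p-1,q+1}=F_{p-1}\Cc_{p+q}$ and $B^0=0$, so $E^0_{p,q}=F_p\Cc_{p+q}/F_{p-1}\Cc_{p+q}$, exactly the associated graded. The differential $d^0\colon E^0_{p,q}\to E^0_{p,q-1}$ is precisely the one induced by $d$ on this associated graded complex, whence $E^1_{p,q}=H_{p+q}(E^0_{p,\bullet})$; both formulas agree with the statement. The complete verification of all these steps is carried out in \cite[Theorem 5.4.1]{wei:homalg}, to which I would ultimately appeal for the index bookkeeping.
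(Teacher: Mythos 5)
Your construction of the pages via the approximate cycles $Z^r_{p,q}$ and boundaries $B^r_{p,q}$ is the standard one, your index bookkeeping checks out, and the identifications of $E^0$ and $E^1$ are correct. This is essentially the same route as the paper, which offers no independent argument and simply cites \cite[Theorem 5.4.1]{wei:homalg} — the very result you unpack and then appeal to for the $E^{r+1}\cong H(E^r,d^r)$ step.
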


In order to discuss convergence for the spectral sequence in Proposition \ref{prop:spseq}, we introduce a bit of terminology (we follow \cite[Chapter 5]{wei:homalg}). The expert reader may skip to Corollary \ref{cor:conv}.

Recall that a (homology) spectral sequence is \emph{bounded below} if for each $n$ there is $s=s(n)$ such that the terms $E^r_{pq}$ with $p+q=n$ vanish for all $p<s$. A spectral sequence is \emph{regular} if for each $p$ and $q$ the differentials $d^r_{pq}$ leaving $E^r_{pq}$ are zero for all large $r$. 

\begin{rema}\label{rem:bbcase}
Bounded below spectral sequences are regular. If $\F$ is a bounded below filtration (see Remark \ref{rem:bb}), then the spectral sequence in Proposition \ref{prop:spseq} is bounded below, hence regular.
\end{rema}

For each $n\in \Z$, the homology group $H_n(\Cc)$ receives an induced filtration 
\[
\cdots\subseteq \F_pH_n(\Cc)\subseteq \F_{p+1}H_n(\Cc)\subseteq \cdots\subseteq H_n(C).
\]
We say the spectral sequence \emph{abuts to} to $H_*(\Cc)$ if, for all $p,q,n\in \Z$,
\begin{enumerate}
\item
there are isomorphisms 
\begin{equation}\label{eq:weakconv}
\beta_{pq}\colon E^\infty_{pq}\cong \F_p H_{p+q}(\Cc)/\F_{p-1} H_{p+q}(\Cc);
\end{equation}
\item $H_n(\Cc)=\cup_p \F_p H_n(\Cc)$;
\item $\cap \F_pH_n(\Cc)=0$.
\end{enumerate}
When $(\F,\Cc)=(F,\Tot(C))$ or $(\F,\Cc)=(F,\Tot(C_\A))$, items 2 and 3 above follow from Remarks \ref{rem:ex} and \ref{rem:bb} respectively.

We say the spectral sequence \emph{converges to} $H_*(\Cc)$ if it abuts to $H_*(\Cc)$, it is regular, and it holds for each $n\in \Z$ that
\[
H_n(\Cc)=\varprojlim_{p\in \Z} \frac{H_n(\Cc)}{\F_pH_n(\Cc)}.
\]
Note that a bounded below (hence regular) spectral sequence always satisfies the condition above, therefore it converges to $H_*(\Cc)$ as soon as the abutment condition holds. This applies to the spectral sequences associated to $(F,\Tot(C))$ and $(F,\Tot(C_\A))$, because of Remarks \ref{rem:bb} and \ref{rem:bbcase}.

Suppose $\{E^r_{pq}\}$ and $\{E^{\prime r}_{pq}\}$ satisfy \eqref{eq:weakconv} with respect to $H_*$ and $H^\prime_*$ respectively. We say that a map $h\colon H_*\to H^\prime_*$ is \emph{compatible} with a morphism $f\colon E\to E^\prime$ if
\begin{itemize}
\item $h(\F_p H_n)\subseteq \F_pH_n^\prime$ for all $n\in \Z$;
\item the induced maps $\F_p H_n/\F_{p-1} H_n\to\F_p H^\prime_n/\F_{p-1}H_n^\prime$ correspond under $\beta$ and $\beta^\prime$ to $f^\infty_{pq}\colon E^\infty_{pq}\to E^{\prime\infty}_{pq}$, $q=n-p$.
\end{itemize}

We recall the following result \cite[Theorem 5.5.1]{wei:homalg}.

\begin{theo}\label{thm:convergence}
Condition \eqref{eq:weakconv} holds for bounded below spectral sequences. 

In particular, if $(\F,\Cc)$ is a filtered chain complex where $\F$ is exhaustive and bounded below, then the associated spectral sequence is bounded below and converges to $H_*(\Cc)$. 
Moreover, the convergence is natural: if $f\colon\Cc\to\Cc^\prime$ is a map of filtered complexes, then the map $f_*\colon H_*(\Cc)\to H_*(\Cc^\prime)$ is compatible with the corresponding morphism of spectral sequences.
\end{theo}
\begin{coro}\label{cor:comparison}
With notations as above, if $f^r\colon E^r_{pq}\cong E^{\prime r}_{pq}$ is an isomorphism for all $p,q$ and some $r$ (hence for $r=\infty$, see \cite[Lemma 5.2.4]{wei:homalg}), then $f_*\colon H_*(\Cc)\to H_*(\Cc^\prime)$ is an isomorphism.
\end{coro}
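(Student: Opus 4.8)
The plan is to combine the naturality of convergence from Theorem~\ref{thm:convergence} with a routine comparison argument on the associated graded pieces, invoking boundedness below to anchor an induction and exhaustiveness to conclude it. Throughout, $f$ denotes the morphism of spectral sequences induced by the map of filtered complexes $\Cc\to\Cc^\prime$, and $f_*$ the induced map on homology. First I would promote the hypothesis from a single page to the limit page: since $f$ commutes with the differentials $d^r$, if $f^r_{pq}$ is an isomorphism for all $p,q$ then passing to homology shows the same for $f^{r+1}_{pq}$, and hence inductively for $f^s_{pq}$ with every $s\geq r$. As both spectral sequences are bounded below, hence regular (Remark~\ref{rem:bbcase}), the terms stabilize and $f^\infty_{pq}\colon E^\infty_{pq}\to E^{\prime\infty}_{pq}$ is an isomorphism for all $p,q$; this is precisely the implication cited in the statement via \cite[Lemma 5.2.4]{wei:homalg}.

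Next I would feed this into the naturality clause of Theorem~\ref{thm:convergence}. Because the filtrations here are exhaustive and bounded below, both spectral sequences converge and $f_*$ is \emph{compatible} with $f$: it sends $\F_p H_n(\Cc)$ into $\F_p H_n(\Cc^\prime)$, and under the abutment isomorphisms $\beta_{pq},\beta^\prime_{pq}$ of \eqref{eq:weakconv} the induced map $\F_p H_n(\Cc)/\F_{p-1}H_n(\Cc)\to \F_p H_n(\Cc^\prime)/\F_{p-1}H_n(\Cc^\prime)$ is identified with $f^\infty_{pq}$, $q=n-p$. By the previous paragraph this map on graded quotients is an isomorphism for every $p$.

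Finally I would climb the filtration. Since it is bounded below (Remark~\ref{rem:bb}), for $p$ small enough $\F_pH_n(\Cc)=0=\F_pH_n(\Cc^\prime)$, which supplies the base case. For the inductive step I would apply the five lemma to the ladder comparing the short exact sequence $0\to\F_{p-1}H_n(\Cc)\to\F_pH_n(\Cc)\to\F_pH_n(\Cc)/\F_{p-1}H_n(\Cc)\to 0$ with its primed analogue, where $f_*$ is an isomorphism on the left by induction and on the right by the second step; this yields $\F_pH_n(\Cc)\cong\F_pH_n(\Cc^\prime)$ for all $p$. Since the filtration is exhaustive (Remark~\ref{rem:ex}), $H_n(\Cc)=\varinjlim_p\F_pH_n(\Cc)$ and likewise for $\Cc^\prime$, and a filtered colimit of isomorphisms is an isomorphism, so $f_*$ is an isomorphism in each degree.

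The argument is purely formal once convergence and its naturality are in hand, so I do not expect a genuine obstacle; the only delicate point is the bookkeeping, namely invoking boundedness below exactly to start the induction and exhaustiveness exactly to pass to the colimit—the two properties recorded in Remarks~\ref{rem:ex} and~\ref{rem:bb}.
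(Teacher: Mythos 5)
Your argument is correct and is exactly the standard comparison-theorem proof that the paper implicitly delegates to Weibel (the corollary is stated without proof, as a direct consequence of Theorem~\ref{thm:convergence} and \cite[Lemmas 5.2.4 and Theorem 5.2.12]{wei:homalg}): mapping lemma to pass to $E^\infty$, compatibility to identify the maps on graded quotients, then induction up the bounded-below filtration and exhaustiveness to conclude. The only loose phrase is that for a merely bounded-below (not bounded) spectral sequence the terms need not literally stabilize---$E^\infty_{pq}$ is a colimit of quotients once the outgoing differentials vanish---but the implication you need there is precisely the one already granted in the statement via the cited lemma.
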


\begin{coro}\label{cor:conv}
There are convergent spectral sequences
\begin{align*}
E^1_{pq}=H_q((C_\A)_{p,\bullet},\partial^{s*})&\Rightarrow H_{p+q}(\Tot(C_\A)_\bullet,d^s)\cong H_{p+q}^s(X,\phi)\\
E^{\prime 1}_{pq}=H_q(C_{p,\bullet},\partial^{s*})&\Rightarrow H_{p+q}(\Tot(C)_\bullet,d^s).
\end{align*}
Furthermore, the inclusion chain map
\[
J\colon (\Tot(C{_\A})_\bullet,d^s)\to (\Tot(C)_\bullet,d^s)
\]
is a quasi-isomorphism.
\end{coro}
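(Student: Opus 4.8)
The plan is to read the entire statement off the general spectral-sequence machinery already assembled in this section, so that the only genuine mathematical input is the column-wise quasi-isomorphism of Proposition~\ref{prop:isochain}. First I would apply Proposition~\ref{prop:spseq} to the vertical filtration $F$ on $\Tot(C)_\bullet$ and, identically, on $\Tot(C_\A)_\bullet$. Since $F_p\Tot(C)_{p+q}=\bigoplus_{L-M=p+q,\,L\leq p}C_{L,M}$, the subquotient $E^0_{pq}=F_p\Tot(C)_{p+q}/F_{p-1}\Tot(C)_{p+q}$ retains exactly the single summand with $L=p$, namely $C_{p,-q}$. On the associated graded only the $L$-preserving part of $d^s=\partial^s+\partial^{s*}$ survives, because $\partial^s$ lowers $L$ and hence lands in $F_{p-1}$; thus $d^0=\partial^{s*}$ and $E^1_{pq}=H_q(C_{p,\bullet},\partial^{s*})$, exactly as asserted, and likewise $E^1_{pq}=H_q((C_\A)_{p,\bullet},\partial^{s*})$ for the filtered complex $\Tot(C_\A)_\bullet$.

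For convergence I would invoke Theorem~\ref{thm:convergence}: the filtration $F$ is exhaustive (Remark~\ref{rem:ex}) and bounded below (Remark~\ref{rem:bb}) for both complexes, so each associated spectral sequence is bounded below, hence regular, and converges to the homology of its totalization. This yields the two displayed abutments $E^1_{pq}\Rightarrow H_{p+q}(\Tot(C_\A)_\bullet,d^s)$ and $E^{\prime 1}_{pq}\Rightarrow H_{p+q}(\Tot(C)_\bullet,d^s)$, the first of which is $H^s_{p+q}(X,\phi)$ by definition.

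It remains to promote Proposition~\ref{prop:isochain} to a statement about the totalizations. The inclusion $J$ is degreewise and leaves each bidegree $(L,M)$ in place, so it carries $F_p\Tot(C_\A)_\bullet$ into $F_p\Tot(C)_\bullet$; that is, $J$ is a morphism of filtered complexes. By the naturality clause of Theorem~\ref{thm:convergence} it therefore induces a morphism of the two spectral sequences, compatible with the induced filtrations on homology. On the $E^1$ page this morphism is, in each column $p$, precisely the map induced by $J$ on the vertical homology $H_q(\,\cdot\,,\partial^{s*})$, which is an isomorphism for every $p$ and $q$ by Proposition~\ref{prop:isochain}. Applying Corollary~\ref{cor:comparison} with $r=1$, an $E^1$-isomorphism of convergent spectral sequences forces an isomorphism on the abutments, so $J_*\colon H_*(\Tot(C_\A)_\bullet)\to H_*(\Tot(C)_\bullet)$ is an isomorphism and $J$ is a quasi-isomorphism.

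The main obstacle is not really in this argument, since every step is bookkeeping once Proposition~\ref{prop:isochain} is granted; it lies in two small checks. The first is confirming that $E^0$ genuinely collapses to a single summand per subquotient, so that the $E^1$ identification is \emph{literally} the vertical homology, which amounts to tracking the constraint $L-M=N$ against $L\leq p$ and to fixing the homological sign convention that sends $M$ to the index $q=-M$; this convention is the one place where an indexing slip could corrupt the statement. The second is verifying that $J$ respects the filtration, which is immediate here but is the hypothesis that legitimizes the naturality of the comparison and hence the whole reduction to Proposition~\ref{prop:isochain}.
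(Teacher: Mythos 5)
Your proposal is correct and follows essentially the same route as the paper: apply Proposition~\ref{prop:spseq} to the vertical filtration on both totalizations, use Remarks~\ref{rem:ex} and~\ref{rem:bb} together with Theorem~\ref{thm:convergence} for convergence, identify the $E^1$-pages with the column-wise cohomologies, and conclude via Proposition~\ref{prop:isochain} and Corollary~\ref{cor:comparison}. The paper's proof is just a terser version of yours; your extra care with the $E^0$ collapse, the index convention $q=-M$, and the check that $J$ is filtered are exactly the bookkeeping the paper leaves implicit.
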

\begin{proof}
The spectral sequences arise by applying Proposition \ref{prop:spseq} with $(\F,\Cc)=(F,\Tot(C))$ and $(\F,\Cc)=(F,\Tot(C_\A))$. Convergence follows from Theorem \ref{thm:convergence}. The map $J$ induces isomorphisms
\[
J^1\colon E^1_{pq}=H_q((C_\A)_{p,\bullet},\partial^{s*})\cong E^{\prime 1}_{pq}=H_q(C_{p,\bullet},\partial^{s*})
\]
for all $p,q$ by Proposition \ref{prop:isochain}. The result follows from Corollary \ref{cor:comparison} above.
\end{proof}
\begin{coro}
Homology groups for the Smale space $(X,\phi)$ can be equally defined as ($N\in \Z$)
\[
H_N^s(X,\phi)=H_N(\Tot(C^s(\pi))_\bullet,d^s).
\]
\end{coro}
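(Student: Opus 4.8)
The plan is to read the statement off from the definition of $H_N^s(X,\phi)$ together with the two quasi-isomorphisms already at our disposal, so that the whole argument reduces to composing them. Recall that by definition $H_N^s(X,\phi)=H_N(\Tot(C^s_{\mathcal{Q},\A}(\pi))_\bullet,d^s)$, so it is enough to exhibit a chain of isomorphisms connecting the totalization of $C^s_{\mathcal{Q},\A}(\pi)$ to the totalization of the full bicomplex $C^s(\pi)$.

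First I would invoke the quasi-isomorphisms $C_{,\A}^s(\pi)\to C_{\mathcal{Q},\A}^s(\pi)\to C_{\mathcal{Q}}^s(\pi)$ established in \cite[Section 5.3]{put:HoSmale}. The first of these, being a quasi-isomorphism of bicomplexes, induces a quasi-isomorphism on totalizations, whence
\[
H_N(\Tot(C_\A)_\bullet,d^s)\cong H_N(\Tot(C^s_{\mathcal{Q},\A}(\pi))_\bullet,d^s)=H_N^s(X,\phi)
\]
for every $N\in\Z$. Next I would apply Corollary \ref{cor:conv}, which asserts precisely that the inclusion chain map $J\colon(\Tot(C_\A)_\bullet,d^s)\to(\Tot(C)_\bullet,d^s)$ is a quasi-isomorphism; this gives
\[
H_N(\Tot(C_\A)_\bullet,d^s)\cong H_N(\Tot(C^s(\pi))_\bullet,d^s).
\]
Composing the two displayed isomorphisms yields $H_N^s(X,\phi)\cong H_N(\Tot(C^s(\pi))_\bullet,d^s)$, which is the claim.

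The argument is essentially formal once Corollary \ref{cor:conv} is in hand, so I do not expect a substantive obstacle; the content lies entirely in the two inputs being composed. The only point that warrants a moment's care is the passage of Putnam's quasi-isomorphism of bicomplexes $C_{,\A}^s(\pi)\to C^s_{\mathcal{Q},\A}(\pi)$ to the associated totalizations. This is guaranteed by exactly the boundedness considerations used throughout this section: both $C_{,\A}^s(\pi)$ and $C^s_{\mathcal{Q},\A}(\pi)$ are concentrated in a bounded strip in the $M$-direction by Proposition \ref{prop:boundM} (indeed $C^s_{\mathcal{Q},\A}(\pi)$ is supported on a bounded rectangle), so the vertical-filtration spectral sequences converge and the comparison principle of Corollary \ref{cor:comparison} applies verbatim. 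With that observed, the result follows by transitivity of the isomorphisms.
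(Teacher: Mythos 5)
Your argument is correct and is exactly the route the paper intends: the corollary is stated without proof precisely because it is the composite of the quasi-isomorphism $J\colon(\Tot(C_\A)_\bullet,d^s)\to(\Tot(C)_\bullet,d^s)$ from Corollary \ref{cor:conv} with the identification $H_{p+q}(\Tot(C_\A)_\bullet,d^s)\cong H^s_{p+q}(X,\phi)$ already recorded in that corollary's statement (which rests on Putnam's quasi-isomorphism $C_{,\A}^s(\pi)\to C^s_{\mathcal{Q},\A}(\pi)$). Your additional remark on boundedness justifying the passage from a levelwise quasi-isomorphism of bicomplexes to one of totalizations is a sensible precaution and consistent with the conventions used throughout the section.
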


\section{Simplicial viewpoint}\label{sec:sim}

The $s/u$-bijective pair $\pi=(Y,\psi,\pi_s,Z,\zeta,\pi_u)$ for $(X,\phi)$ gives rise to a \emph{bisimplicial} Smale space $(\Sigma_{L,M}(\pi))_{L,M\geq 0}$. We will drop the reference to $\pi$ for brevity.

The face maps are given by \eqref{eq:deltalm}. We stress that the $\delta_l$'s are $s$-bijective and the $\delta_{,m}$'s are $u$-bijective. The degeneracy maps are as follows:
\begin{align*}
s_l\colon  \Sigma_{L,M}&\to \Sigma_{L+1,M}\\
(y_0,\dots,y_l,\dots,y_L,z_0,\dots,z_M)&\mapsto (y_0,\dots,y_l,y_l\dots,y_L,z_0,\dots,z_M)\\
s_{,m}\colon  \Sigma_{L,M}&\to \Sigma_{L,M+1}\\
(y_0,\dots,y_L,z_0,\dots,z_m,\dots,z_M)&\mapsto (y_0,\dots,y_L,z_0,\dots,z_m,z_m,\dots,z_M),
\end{align*}
for $l=0,\cdots, L$ and $m=0,\dots, M$.

\begin{rema}
Let us point out a notational difference between the present paper and \cite{put:HoSmale}. Whenever we write $\delta_l$, Putnam appends an extra comma to the subscript, i.e., $\delta_{l,}$. We deviated from this convention because the distinction between $\delta_{l,}$ and $\delta_{,m}$ is less important in our case, for we rarely use $\delta_{,m}$. In accordance with this usage, we denote the map $s_l$ and \emph{not} $s_{l,}$.
\end{rema}

\begin{rema}
Note that $s_l(\Sigma_{L,M})\subseteq \Sigma_{L+1,M}$ is a closed shift-invariant system, clearly isomorphic to $\Sigma_{L,M}$. The same holds for $s_{,m}(\Sigma_{L,M})\subseteq \Sigma_{L,M+1}$.
\end{rema}

\begin{rema}
It is not difficult to see that, for each $l$, the map $s_l$ is $s$-bijective because its inverse is given by $\delta_l$. The situation is different for the maps $s_{,m}$: they are only $s$-resolving.
\end{rema}

\begin{prop}
There are induced maps
\begin{align*}
s_l^s\colon  D^s(\Sigma_{L,M})&\to D^s(\Sigma_{L+1,M})\\
s_{,m}^{s*}\colon  D^s(\Sigma_{L,M+1})&\to D^s(\Sigma_{L,M}).
\end{align*}
Moreover, the map $s^s_l$ is split-injective.
%
\end{prop}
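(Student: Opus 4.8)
The plan is to obtain the two maps from the two opposite functorialities of $D^s$ and then read off split-injectivity from a simplicial identity. The map $s_l^s$ requires nothing beyond the covariant functoriality for $s$-bijective maps: by the preceding remark $s_l$ is $s$-bijective, hence restricts to a homeomorphism $\Sigma_{L,M}^s(x)\to\Sigma_{L+1,M}^s(s_l(x))$ on each stable set, so the pushforward $[E]\mapsto[s_l(E)]$ of compact open subsets of stable sets is defined and descends to $D^s$.

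For $s_{,m}^{s*}$ the covariant rule is unavailable, since $s_{,m}$ is only $s$-resolving; instead I would build a contravariant map by pullback of clopens. As $s_{,m}$ merely duplicates one $z$-coordinate it is injective, it commutes with the shift, and (being a map of Smale spaces) it carries stable sets into stable sets; moreover two points have stably equivalent images under $s_{,m}$ exactly when they are themselves stably equivalent, so the preimage $s_{,m}^{-1}(E)$ of a compact open $E\subseteq\Sigma_{L,M+1}^s(e)$ is a compact open subset lying inside a single stable set of $\Sigma_{L,M}$. One then sets $s_{,m}^{s*}[E]=[s_{,m}^{-1}(E)]$. The real work, and the step I expect to be the main obstacle, is checking that this assignment descends to $D^s$: additivity on disjoint clopens and shift-invariance (from $s_{,m}\sigma=\sigma s_{,m}$) are immediate, but the bracket relation $[E,F]=E$, $[F,E]=F$ must be shown to pull back correctly, which rests on the fact that $s_{,m}$ commutes with the bracket wherever the latter is defined. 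This is exactly the point not covered verbatim by the functoriality recalled in the preliminaries, as it mirrors the $u$-bijective (pullback) case rather than following from it.

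Finally, split-injectivity of $s_l^s$ is the clean part. The explicit formulas give the simplicial identity $\delta_l\circ s_l=\mathrm{id}_{\Sigma_{L,M}}$ (deleting the coordinate just duplicated). Since $\delta_l$ is $s$-bijective by \cite[Theorem 2.6.13]{put:HoSmale}, it induces a covariant $\delta_l^s$, and functoriality yields $\delta_l^s\circ s_l^s=(\delta_l\circ s_l)^s=\mathrm{id}$, so $\delta_l^s$ is a retraction for $s_l^s$ and the latter is split-injective. I would also point out that this does not make $s_l^s$ an isomorphism: its image consists of classes supported on stable sets of points with $y_l=y_{l+1}$, whereas $\Sigma_{L+1,M}$ has stable sets on which the $l$-th and $(l+1)$-st $Y$-coordinates fail to be stably equivalent, so surjectivity genuinely fails in general.
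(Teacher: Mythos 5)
Your proposal follows essentially the same route as the paper: $s_l^s$ is the pushforward along the homeomorphism of stable sets induced by the $s$-bijective map $s_l$, $s_{,m}^{s*}$ is the pullback of clopens $E\mapsto s_{,m}^{-1}(E)$, and $\delta_l^s$ (via $\delta_l\circ s_l=\mathrm{id}$) furnishes the retraction exhibiting split-injectivity. The one point where the paper is more careful is the compactness of $s_{,m}^{-1}(E)$ in the locally compact topology of the stable set, which does not follow from injectivity and preservation of stable equivalence alone but from the fact that $s$-resolving maps restrict to \emph{proper} maps on stable sets \cite[Theorem 2.5.4]{put:HoSmale}; with that citation in place of your heuristic, your argument matches the paper's.
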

\begin{proof}
Recall that (equivalence classes of) compact open sets inside stable orbits provide generators for the dimension groups. Given one of such classes $[E]\in D^s(\Sigma_{L,M})$, the assignment $[E]\mapsto [s_l(E)]$ is a well-defined group morphism because the map $s_l\colon \Sigma^s_{L,M}(e)\to \Sigma^s_{L+1,M}(s_l(e))$ is a homeomorphism.
The splitting for the map $s_l^s$ is given by $\delta_l^s$.
The definition of $s_{,m}^{s*}$ is given by $E\mapsto s_{,m}^{-1}(E)$. This preimage is compact because $s_{,m}$ is proper, as $s$-resolving maps are proper \cite[Theorem 2.5.4]{put:HoSmale}. Since $E\cap F=\emptyset$ implies $s_{,m}(E)\cap s_{,m}(F)=\emptyset$, the map respects the group operation. 
\end{proof}

The theorem below follows easily from the discussion so far (and some simple verifications). See \cite[Chapter 8]{wei:homalg} for the Dold-Kan correspondence.

\begin{theo}
Applying the $D^s$-functor to the bisimplicial space $\Sigma_{\bullet,\bullet}$ results in a simplicial cosimplicial group $(D^s(\Sigma_{\bullet,\bullet}),\delta_l^s,s_l^s,\delta^s_{,m},s^{s*}_{,m})$. Furthermore the \emph{unnormalized} double complex associated to said group via the Dold-Kan correspondence is $(C_{L,M},\partial^s,\partial^{s*})$, as defined in Section \ref{sec:prelims}.
\end{theo}

\begin{rema}
As was mentioned at the beginning of Section \ref{sec:ccc}, the complex $(C_{L,M},\partial^s,\partial^{s*})$ is also the result of applying the $K$-theory functor to $\Sigma_{\bullet,\bullet}$. The intermediate step in this case is constructing the associated $C^*$-algebras, which are AF \cite[Section 4.3]{thomsen:smale}, so the odd $K$-groups vanish. 
\end{rema}

By considering the normalizations (sometimes called the \emph{Moore complexes}) associated to $D^s(\Sigma_{\bullet,\bullet})$ we obtain simplicial versions of the bicomplexes $C_\A,C_{\mathcal{Q}},C_{\A,\mathcal{Q}}$ that were previously introduced. It is well-known that these all yield isomorphic homology groups (see \cite[Theorem 8.3.8]{wei:homalg}). However, it should be noted that these complexes are not as useful as their ``symmetric'' counterpart (to be introduced in the next section), because they don't allow for computational simplifications as in Proposition \ref{prop:boundM}.

\subsection{Symmetric simplicial groups}

Fix $M\geq 0$ and consider the simplicial group $(\Sigma_{\bullet,M},\delta_l^s,s_l^s)$. It carries an action of the symmetric group $S_{L+1}$. Recall that this group is generated by the \emph{adjacent} transpositions $t_l=(l\;l+1)$, $l=0,\dots,L-1$ (see \cite[Section 5, Theorem 3]{johnson:groups}).

The functorial properties of the $D^s$-invariant easily give the theorem below. The notion of symmetric simplicial group is inspired by \cite{grandis:symsets}.

\begin{theo}\label{thm:sym}
The simplicial group $(\Sigma_{\bullet,M},\delta_l^s,s_l^s)$ is a \emph{symmetric object}, i.e., it carries an action of the transpositions $t_i$'s, subject to the defining relations of $S_{L+1}$ and to the following \emph{mixed relations}:
\begin{align*}
\delta^s_j t_i&=t_i \delta^s_j & s_j^st_i&=t_is_j^s & (i<j-1)\\
\delta^s_it_i&=\delta^s_{i+1} &   s_i^st_i&=t_{i+1}t_is^s_{i+1}\\
\delta^s_jt_i&=t_{i-1}\delta^s_j & s^s_jt_i&=t_{i+1}s^s_j & (i>j)\\
&&t_is^s_i&=s^s_i.
\end{align*}
\end{theo}

For some $l$ and $j=1,\dots,L+1-l$, we are going to consider the cycle $\sigma_j=(l+j\;l+j-1\;\cdots\;l+1)$ in $S_{L+1}$ and the compositions $\sigma_js_l$. Note $\sigma_1s_l=s_l$. In other words $\sigma_j s_l$ is an additional degeneracy map which repeats entry $y_l$ at coordinate $l+j$:
\[
\xymatrix{(y_0,\dots,y_l,\dots,y_L)\ar[r]^-{\sigma_js_l} & (y_0,\dots,y_l,\dots,y_{l+j-1},y_l,y_{l+j},\dots y_L)\in \Sigma_{L+1,M}.}
\]
As composition of $s$-bijective maps, the $\sigma_js_l$'s induce group morphisms
\[
\xymatrix{D^s(\Sigma_{L,M}) \ar[r]^-{(\sigma_js_l)^s} & D^s(\Sigma_{L+1,M}).}
\]

It is then natural to define the groups of \emph{degenerate} chains,
\[
\tilde{D}C_{L,M}=\sum_{l,j}(\sigma_j s_l)^s(C_{L-1,M}).
\] 
The subgroup $\sum_{l}(\sigma_1 s_l)^s(C_{L-1,M})$ is preserved by the differential $\partial^s$ thanks to the simplicial identities, but when $j>1$ the identities in Theorem \ref{thm:sym} give the following relation:
\begin{equation}\label{eq:remdpres}
\partial^s(\tilde{D} C_{L,M})\subseteq \tilde{D}C_{L-1,M} + \langle \sigma_j(a)-\text{sgn}(\sigma_j)(a)\mid a\in C_{L,M} \rangle,
\end{equation}
because $\delta^s_l(\sigma_js_l)^s(a)=\sigma_j(a)$ and $\delta^s_{l+j}(\sigma_js_l)^s(a)=a$.

\begin{lemm}
There is an equality ($a\in C_{L,M}$)
\begin{align*}
\langle \sigma_j(a)-\text{sgn}(\sigma_j)(a) \rangle&=\langle t_i(a)+a \mid i=1,\dots,L-1\rangle\\
&=\langle \alpha(a)-\text{sgn}(\alpha)(a) \mid \alpha\in S_{L+1}\rangle.
\end{align*}
\end{lemm}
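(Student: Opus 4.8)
The plan is to prove the statement by a cycle of inclusions among the three subgroups. Write $B_\sigma=\langle\,\sigma_j(a)-\operatorname{sgn}(\sigma_j)a\,\rangle$ for the first group, $B_t=\langle\,t_i(a)+a\,\rangle$ for the middle one, and $B_S=\langle\,\alpha(a)-\operatorname{sgn}(\alpha)a\mid\alpha\in S_{L+1}\,\rangle$ for the last one (all generated over $a\in C_{L,M}$). Two links of the cycle are essentially free. Since each $\sigma_j$ lies in $S_{L+1}$, every defining generator of $B_\sigma$ is literally a defining generator of $B_S$, so $B_\sigma\subseteq B_S$. Conversely, a length-$2$ cycle is just an adjacent transposition and $\operatorname{sgn}(\sigma_2)=-1$, whence $\sigma_2(a)-\operatorname{sgn}(\sigma_2)a=t_i(a)+a$; thus each generator of $B_t$ is the $j=2$ instance of a generator of $B_\sigma$, giving $B_t\subseteq B_\sigma$. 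The whole lemma therefore reduces to the single substantial inclusion $B_S\subseteq B_t$, which closes the cycle $B_t\subseteq B_\sigma\subseteq B_S\subseteq B_t$ and forces all three to coincide.

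For $B_S\subseteq B_t$ I would induct on the length $\ell(\alpha)$ of $\alpha$ as a word in the adjacent transpositions, using that these generate $S_{L+1}$ (see \cite[Section 5, Theorem 3]{johnson:groups}). The base case $\alpha=\mathrm{id}$ is trivial, as $a-a=0$. For the inductive step write $\alpha=t_i\beta$ with $\ell(\beta)=\ell(\alpha)-1$, set $b=\beta(a)$, and use $\operatorname{sgn}(t_i)=-1$ to obtain
\[
\alpha(a)-\operatorname{sgn}(\alpha)a=t_i(b)+\operatorname{sgn}(\beta)a=\bigl(t_i(b)+b\bigr)-\bigl(\beta(a)-\operatorname{sgn}(\beta)a\bigr).
\]
The first bracket is a defining generator of $B_t$ evaluated at $b$, and the second lies in $B_t$ by the inductive hypothesis, so their difference lies in $B_t$. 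This telescoping identity — together with the multiplicativity of the sign — is the engine of the argument, and the same computation, applied to the factorization of a $j$-cycle into $j-1$ adjacent transpositions with $\operatorname{sgn}(\sigma_j)=(-1)^{j-1}$, simultaneously confirms that the cyclic generators behave compatibly.

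The one point that genuinely needs care — and which I expect to be the only real obstacle — is a matching of index ranges: the induction reaches all of $S_{L+1}$ only if $B_t$ contains the relation $t_i(a)+a$ for \emph{every} adjacent transposition $t_0,\dots,t_{L-1}$, since the subgroup generated by a proper subset would recover only the corresponding parabolic (a copy of some $S_L$) and would be strictly smaller. Dually, the inclusion $B_t\subseteq B_\sigma$ requires each adjacent transposition to occur among the $\sigma_j$ as a $j=2$ member. Both requirements come down to checking that the family of consecutive cycles $\sigma_j$ of Section \ref{sec:sim} covers the transposition $t_0$ as well; tracing the definitions, this is supplied by the $l=0$ summand, where the cycle, after the face map $\delta_l$ has relabelled coordinates, descends to a consecutive $j$-cycle whose starting index is $0$. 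Once this coverage is verified the three inclusions above are in place, and the circle $B_t\subseteq B_\sigma\subseteq B_S\subseteq B_t$ establishes the asserted equalities.
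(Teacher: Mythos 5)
Your proposal is correct and follows essentially the same route as the paper: your inductive step $\alpha(a)-\text{sgn}(\alpha)a=(t_i(b)+b)-(\beta(a)-\text{sgn}(\beta)a)$ with $b=\beta(a)$ is exactly the paper's telescoping sum read one term at a time, and the two easy inclusions ($B_\sigma\subseteq B_S$ trivially, $B_t\subseteq B_\sigma$ via the $j=2$ cycles) are handled identically. The one caveat concerns your final paragraph: the cycles $\sigma_j=(l+j\;\cdots\;l+1)$ with $l\geq 0$ never move the index $0$ (the $l=0$, $j=2$ instance is $(2\;1)=t_1$), so the coverage of $t_0$ is \emph{not} supplied by the $l=0$ summand as you claim — but this is an off-by-one wrinkle already present in the statement's range $i=1,\dots,L-1$ versus the generators $t_0,\dots,t_{L-1}$ of $S_{L+1}$, and the paper's own proof glosses over it in the same way ("$\sigma_j(a)$ for $j=2$ includes all elements of the form $t_i(a)$"), so it is a defect of the indexing conventions rather than of your strategy.
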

\begin{proof}
Since the $t_i$'s are generators we can write $\alpha(a)=t_{i_1}\cdots t_{i_n}(a)$. Then we have
\begin{align*}
&(t_{i_1}\cdots t_{i_n}(a)+t_{i_2}\cdots t_{i_n}(a))-(t_{i_2}\cdots t_{i_n}(a)+t_{i_3}\cdots t_{i_n}(a))\\
{}+{}&(t_{i_3}\cdots t_{i_n}(a)+t_{i_4}\cdots t_{i_n}(a))-\cdots\pm(t_{i_n}(a)+a)=\alpha(a)\pm a.
\end{align*}
The sign is positive when $n$ is odd and negative when $n$ is even, i.e., it is in accordance with $-\text{sgn}(\alpha)$. Note that our notation for $\sigma_j$ does not make reference to the index $l$, so that $\sigma_j(a)$ for $j=2$ includes all elements of the form $t_i(a)$.
\end{proof}

We can now ``correct'' our definition of degenerate chains by setting $DC_{L,M}$ to be the group generated by $\tilde{D}C_{L,M}$ and $\langle \alpha(a)-\text{sgn}(\alpha)(a) \mid a\in D^s(\Sigma_{L,M}),\alpha\in S_{L+1}\rangle$. 

\begin{lemm}
$(DC_{\bullet,M},\partial^s)$ is a well-defined subcomplex of $(C_{\bullet,M},\partial^s)$.
\end{lemm}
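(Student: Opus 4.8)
The plan is to use that $\partial^s$ is a group homomorphism, so it is enough to check that $\partial^s$ carries each \emph{generator} of $DC_{L,M}$ into $DC_{L-1,M}$; the relation $(\partial^s)^2=0$ is then automatically inherited from the ambient complex $(C_{\bullet,M},\partial^s)$. By construction $DC_{L,M}$ is generated by two families: the degenerate chains $\tilde{D}C_{L,M}=\sum_{l,j}(\sigma_js_l)^s(C_{L-1,M})$, and the symmetrization elements $\alpha(a)-\operatorname{sgn}(\alpha)(a)$ with $a\in C_{L,M}$ and $\alpha\in S_{L+1}$. I would treat these two families in turn.

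For the degenerate family the work is essentially already recorded in \eqref{eq:remdpres}, which gives
\[
\partial^s(\tilde{D}C_{L,M})\subseteq \tilde{D}C_{L-1,M}+\langle\sigma_j(a)-\operatorname{sgn}(\sigma_j)(a)\rangle.
\]
Here the only surviving non-degenerate contributions come from the two faces $\delta^s_l(\sigma_js_l)^s=\sigma_j$ and $\delta^s_{l+j}(\sigma_js_l)^s=\mathrm{id}$, and their signs $(-1)^l$ and $(-1)^{l+j}$ combine, using $\operatorname{sgn}(\sigma_j)=(-1)^{j-1}$ for a $j$-cycle, into the correction term; all remaining faces produce lower degeneracies lying in $\tilde{D}C_{L-1,M}$. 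By the previous lemma the correction subgroup $\langle\sigma_j(a)-\operatorname{sgn}(\sigma_j)(a)\rangle$ coincides with $\langle\alpha(a)-\operatorname{sgn}(\alpha)(a)\rangle$, so the right-hand side equals $\tilde{D}C_{L-1,M}+\langle\alpha(a)-\operatorname{sgn}(\alpha)(a)\rangle=DC_{L-1,M}$ by definition.

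For the symmetrization family I would again invoke the previous lemma, which identifies $\langle\alpha(a)-\operatorname{sgn}(\alpha)(a)\rangle$ with $\langle t_i(a)+a\rangle$; since $\partial^s$ is additive it then suffices to show $\partial^s(t_i(a)+a)\in DC_{L-1,M}$ for each adjacent transposition $t_i$ and each $a\in C_{L,M}$. The computation splits the sum $\partial^s=\sum_k(-1)^k\delta_k^s$ according to the mixed relations of Theorem \ref{thm:sym}: for $k<i$ one has $\delta_k^st_i=t_{i-1}\delta_k^s$, for $k>i+1$ one has $\delta_k^st_i=t_i\delta_k^s$, while the two boundary faces collapse as $\delta_i^st_i=\delta_{i+1}^s$ and $\delta_{i+1}^st_i=\delta_i^s$. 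Adding $\partial^s(t_ia)$ to $\partial^s(a)$, the two collapse terms cancel the $k=i,i+1$ terms of $\partial^s(a)$ because the signs $(-1)^i$ and $(-1)^{i+1}$ are opposite, and every remaining term has the shape $t_{i'}(\delta_k^s a)+\delta_k^s a$ with $\delta_k^s a\in C_{L-1,M}$ and $t_{i'}$ an adjacent transposition of $S_L$; each such term therefore lies in the symmetrization subgroup of $DC_{L-1,M}$. This yields $\partial^s(t_i(a)+a)\in DC_{L-1,M}$ and completes the verification.

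I expect the main obstacle to be the bookkeeping in this last step: one must track exactly which of the mixed relations in Theorem \ref{thm:sym} applies in each range of $k$ relative to $i$, handle the two boundary indices $k=i,i+1$ where the collapse occurs, and confirm that the signs make the unwanted central terms cancel. A minor but necessary point is checking that the transpositions $t_{i-1}$ and $t_i$ appearing after $\delta_k^s$ are genuine generators of the $S_L$-action on $C_{L-1,M}$, i.e.\ that their indices stay in the range $0,\dots,L-2$, which holds precisely because $k<i$ forces $i-1\le L-2$ and $k>i+1$ forces $i\le L-2$.
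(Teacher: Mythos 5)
Your proof is correct and follows essentially the same route as the paper's: reduce via \eqref{eq:remdpres} and the preceding lemma to the generators $t_i(a)+a$, then use the mixed relations of Theorem \ref{thm:sym} to see that the $i$ and $i+1$ faces cancel while the remaining faces commute past $t_i$ and stay in the symmetrization subgroup. The paper only writes out the cancellation $\delta^s_i(t_i(a)+a)-\delta^s_{i+1}(t_i(a)+a)=0$ explicitly; your additional bookkeeping for the other face indices is accurate and consistent with what the paper leaves implicit.
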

\begin{proof}
In view of the remark in \eqref{eq:remdpres}, we only need to check what happens to $\partial^s(t_i(a)+a)$. By looking at the identities in Theorem \ref{thm:sym}, we see that it suffices to check the expression $\delta^s_i(t_i(a)+a)-\delta^s_{i+1}(t_i(a)+a)$. It is easy to see that $\delta^s_{i+1}t_i=\delta^s_i$ so we get
\[
\delta^s_i(t_i(a)+a)-\delta^s_{i+1}(t_i(a)+a)=\delta^s_{i+1}(a)+\delta^s_i(a)-\delta^s_i(a)-\delta^s_{i+1}(a)=0.
\]
Therefore $\partial^s$ preserves the subgroup $\langle \alpha(a)-\text{sgn}(\alpha)(a) \mid a\in D^s(\Sigma_{L,M}),\alpha\in S_{L+1}\rangle$.
\end{proof}

\begin{theo}\label{thm:degthm}
Consider the short exact sequence
\[
\xymatrix{0\ar[r]& DC_{\bullet,M} \ar[r] & C_{\bullet,M} \ar[r] & \frac{C_{\bullet,M}}{DC_{\bullet,M}} \ar[r] & 0.}
\]
The complex $DC_{\bullet,M}$ is acyclic, hence the projection map is a quasi-isomorphism. \end{theo}
\begin{proof}
Set $DC_L=DC_{L,M}$ for brevity. We filter $DC_{\bullet,M}$ by setting $F_{0}DC_L=0$ and
\begin{align*}
F_pDC_L&=\sum_{l=0}^{k}\sum_{j=1}^{L-l}(\sigma_js_l)^s(C_{L-1,M})\\
&+\sum_{j=1}^{n}(\sigma_js_{k+1})^s(C_{L-1,M})+\langle \alpha(a)-\text{sgn}(\alpha)(a)\rangle
\end{align*}
when $p=L+(L-1)+\cdots+(L-k)+n$ and $0\leq n\leq L-k-1$. When $p\geq L(L+1)/2$ we have $F_pDC_L=DC_L$.
The simplicial (and mixed) identities show that each $F_pDC_\bullet$ is a subcomplex. This filtration $F$ is bounded, so there is a convergent spectral sequence (see \cite[Theorem 5.5.1]{wei:homalg})
\[
E^1_{pq}=H_{p+q}(F_pDC_\bullet/F_{p-1}DC_\bullet)\Rightarrow H_{p+q}(DC_\bullet).
\]
We have reduced ourselves to showing that each $F_pDC/F_{p-1}$ is acyclic. We take $x\in DC_{L-1}$ and compute in $F_pDC/F_{p-1}$:
\begin{align*}
\partial^s(\sigma_ns_{k+1})^s(x)&=\sum_{i=k+n+2}^L(-1)^i(\sigma_ns_{k+1})^s(\delta^s_{i-1})(x)
\\
\partial^s(\sigma_ns_{k+1})^s(\sigma_ns_{k+1})^s(x)&+(\sigma_ns_{k+1})^s\partial^s(\sigma_ns_{k+1})^s(x)\\
&=\sum_{i=k+n+2}^{L+1}(-1)^i(\sigma_ns_{k+1})^s(\delta^s_{i-1})(\sigma_ns_{k+1})^s(x)\\
&-\sum_{i=k+n+2}^L(-1)^i(\sigma_ns_{k+1})^s(\sigma_ns_{k+1})^s(\delta^s_{i-1})(x)\\
&=(-1)^{p}(\sigma_ns_{k+1})^s(x).
\end{align*}
Hence $\psi_L=(-1)^{p}(\sigma_ns_{k+1})^s$ is a chain contraction of the identity map which implies $F_pDC/F_{p-1}$ is acyclic.
\end{proof}
\begin{coro}
There is an isomorphism of double complexes:
\[
((C_{\mathcal{Q}})_{\bullet,\bullet}),\partial^s,\partial^{s*})\cong\Biggl(\frac{C_{\bullet,M}}{DC_{\bullet,M}},\partial^s,\partial^{s*}\Biggr).
\]
In particular, for each $M\geq 0$ there is a quasi-isomorphism of chain complexes
\begin{equation}\label{eq:chainver}
((C_{\bullet,M}),\partial^s)\to ((C_{\mathcal{Q}})_{\bullet,M},\partial^s).
\end{equation}
\end{coro}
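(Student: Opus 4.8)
The plan is to reduce the asserted isomorphism of double complexes to the single group-theoretic identity $DC_{L,M}=D^s_{\mathcal{B}}(\Sigma_{L,M})$, valid for every $L,M\geq 0$. Granting this, the quotient $C_{\bullet,M}/DC_{\bullet,M}$ is literally $(C_{\mathcal{Q}})_{\bullet,M}$, and one checks that $\partial^s$ and $\partial^{s*}$ descend to the same operators on both sides (this is routine, as both differentials are already known to preserve the relevant subgroups). The quasi-isomorphism in \eqref{eq:chainver} is then not an extra statement: it is exactly the projection onto the quotient by the acyclic subcomplex $DC_{\bullet,M}$ furnished by Theorem \ref{thm:degthm}. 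So the whole content lies in identifying the two subgroups.

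Recall that $DC_{L,M}=\tilde{D}C_{L,M}+\langle\,\alpha(a)-\text{sgn}(\alpha)\,a\,\rangle$, while $D^s_{\mathcal{B}}(\Sigma_{L,M})$ is generated by (i) the elements fixed by some non-trivial transposition and (ii) the elements $a-\text{sgn}(\alpha)\alpha(a)$. First I would observe that the two ``antisymmetrization'' subgroups generated by (ii) coincide, since multiplying $\alpha(a)-\text{sgn}(\alpha)\,a$ by $-\text{sgn}(\alpha)$ returns $a-\text{sgn}(\alpha)\alpha(a)$. It then remains to compare $\tilde{D}C_{L,M}$ with the transposition-fixed part (i) modulo this common subgroup. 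One inclusion is immediate: each generator $(\sigma_j s_l)^s(x)$ of $\tilde{D}C_{L,M}$ has image inside the locus $\{y_l=y_{l+j}\}$, hence is fixed by the transposition $(l\;\;l+j)$ and so lies in (i). This gives $DC_{L,M}\subseteq D^s_{\mathcal{B}}(\Sigma_{L,M})$.

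The substantial direction is $D^s_{\mathcal{B}}(\Sigma_{L,M})\subseteq DC_{L,M}$, i.e.\ that every element fixed by a transposition $(j\;\;k)$ lies in $\tilde{D}C_{L,M}$ plus the antisymmetrization subgroup. Here I would use a geometric splitting of the dimension group. The key observation is that, inside any local stable set $\Sigma^s(f,\epsilon)$, the condition $y_j=y_k$ is all-or-nothing: all coordinates $y_0,\dots,y_L$ share the same image under $\pi_s$, and $\pi_s$ is injective on stable sets, so two of them that are stably equivalent and equal-imaged must already coincide. Consequently the diagonal $\{y_j=y_k\}$ meets each basic clopen generator either fully or not at all, and (using that the shift and the bracket both preserve this condition) the dimension group splits as $D^s(\Sigma_{L,M})=D^s_{\mathrm{diag}}\oplus D^s_{\mathrm{off}}$, where the first summand is exactly the image of the degeneracy $(\sigma_{k-j}s_j)^s$ and hence sits inside $\tilde{D}C_{L,M}$. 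For a transposition-fixed $a$ I would write $a=a_{\mathrm{diag}}+a_{\mathrm{off}}$; the transposition acts as the identity on $D^s_{\mathrm{diag}}$, so $a_{\mathrm{off}}$ is again fixed, while on $D^s_{\mathrm{off}}$ the involution has no fixed points on the underlying space.

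The main obstacle is the final step: showing that a transposition-fixed element of the off-diagonal summand is an honest antisymmetrization, i.e.\ lies in the subgroup generated by $a-\text{sgn}(\alpha)\alpha(a)$, rather than merely satisfying $2a_{\mathrm{off}}\in\langle\,a-\text{sgn}(\alpha)\alpha(a)\,\rangle$. This amounts to the vanishing of the degree-zero Tate cohomology of the order-two group acting on $D^s_{\mathrm{off}}$, which I would establish by choosing the generating clopens compatibly with the free involution (a clopen fundamental domain exists by zero-dimensionality of the shift of finite type), so that $D^s_{\mathrm{off}}$ becomes a permutation module and its fixed part coincides with the image of $1+(j\;\;k)^s$. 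Care is needed to carry this fundamental-domain choice through the shift-and-bracket equivalence defining $D^s$, and it is precisely this bookkeeping where the real work of the proof is concentrated.
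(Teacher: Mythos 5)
Your reduction to the single identity $DC_{L,M}=D^s_{\mathcal{B}}(\Sigma_{L,M})$, the identification of the two antisymmetrization subgroups, and the easy inclusion $DC_{L,M}\subseteq D^s_{\mathcal{B}}(\Sigma_{L,M})$ all agree with the paper. The gap is in the reverse inclusion, and it is concentrated in your parenthetical claim that ``the shift and the bracket both preserve'' the diagonal condition $y_j=y_k$. The shift does, and so does \emph{stable} equivalence (your argument via $s$-injectivity of $\pi_s$ is correct, so each basic clopen is indeed entirely on or entirely off the diagonal). But the equivalence relation defining $D^s$ also identifies clopens related by the local \emph{unstable} holonomy ($[E,F]=E$, $[F,E]=F$), and the diagonal is not a union of unstable classes: if $y_j=y_k$ and $e'\sim_u e$, one only gets $y_j'\sim_u y_k'$ together with $\pi_s(y_j')=\pi_s(y_k')$, and since $\pi_s$ is $s$-bijective but not $u$-resolving this does not force $y_j'=y_k'$. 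Hence a diagonal clopen can be $\sim$-equivalent to an off-diagonal one, and the splitting $D^s=D^s_{\mathrm{diag}}\oplus D^s_{\mathrm{off}}$ is not well defined. The paper's proof exploits precisely this failure: given $[E]=[\alpha(E)]$ with $\alpha=(i\;\,i+k)$, it produces the degenerate clopen $F=(\sigma_k s_i)\delta_{i+k}(E)$ and verifies $[E,F]=E$, $[F,E]=F$ coordinatewise (using that $F$ and $\alpha(E)$ agree pointwise in the $(i+k)$-th coordinate), i.e.\ it moves the possibly off-diagonal $E$ onto the diagonal \emph{through} the bracket. Your decomposition would instead park such an $E$ in the off-diagonal summand and ask it to be an antisymmetrization, which is the wrong target.

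Even setting this aside, the step you yourself flag as ``the real work'' --- vanishing of the degree-zero Tate cohomology for the involution on $D^s_{\mathrm{off}}$ via a clopen fundamental domain compatible with the shift-and-bracket equivalence --- is not carried out, and it is exactly where a construction like $F=(\sigma_ks_i)\delta_{i+k}(E)$ would have to reappear. As written, for a transposition-fixed element $a$ the proposal only yields $2a=a-\text{sgn}(\alpha)\alpha(a)\in DC_{L,M}$, not $a\in DC_{L,M}$.
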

\begin{proof}
All we need to do is identifying $D^s_{\mathcal{B}}(\Sigma_{L,M})$ with $DC_{L,M}$. Obviously
\[
\langle \alpha(a)-\text{sgn}(\alpha)(a)\rangle=\langle a-\text{sgn}(\alpha)\alpha(a)\rangle,
\]
and elements in the image of the degeneracy maps are clearly left invariant by some non-trivial transposition. Given $[E]\in D^s_{\mathcal{B}}(\Sigma_{L,M})$ such that $[E]=[\alpha(E)]$ for some transposition, we need to show $[E]=[F]$ for some clopen $F$ in the image of a degeneracy map. Now suppose $\alpha=(i\; i+k)$ and define $F$ to be $(\sigma_ks_i)\delta_{i+k}(E)$. Then the condition $[E,F]=E,[F,E]=F$ trivially holds separately on each coordinate $y_l$ with $l\neq i+k$, and when $l=i+k$ we can check the condition replacing $F$ by $\alpha(E)$, because the coordinate of index $i+k$ is pointwise the same in $F$ and $\alpha(E)$.
\end{proof}

Note that \eqref{eq:chainver} is a chain version of Proposition \ref{prop:isochain} and is proved in \cite[Theorem 4.3.1]{put:HoSmale}. We have used Proposition \ref{prop:isochain} in order to establish the quasi-isomorphism $(\Tot(C{_\A})_\bullet,d^s)\to (\Tot(C)_\bullet,d^s)$ given by inclusion. Dually, it is natural to seek a quasi-isomorphism $(\Tot(C)_\bullet,d^s)\to(\Tot(C_{\mathcal{Q}})_\bullet,d^s)$ induced by projection, which makes use of \eqref{eq:chainver}. Of course the strategy is completely similar to Corollary \ref{cor:conv}, but considering the \emph{horizontal filtration} instead of the vertical one. We skip the details.

\begin{coro}
The projection map in Theorem \ref{thm:degthm} induces a quasi-isomorphism
\[
(\Tot(C)_\bullet,d^s)\to(\Tot(C_{\mathcal{Q}})_\bullet,d^s).
\]
\end{coro}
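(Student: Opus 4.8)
The plan is to mirror the argument of Corollary \ref{cor:conv} but with the roles of the two differentials swapped. Recall that Corollary \ref{cor:conv} used the \emph{vertical} filtration (by the index $L$) together with the convergence machinery of Theorem \ref{thm:convergence} and the comparison result Corollary \ref{cor:comparison}: the key input was that $J$ induces an isomorphism on $E^1$-pages, which was supplied by the cochain quasi-isomorphism of Proposition \ref{prop:isochain}. Here the projection $C \to C_{\mathcal{Q}}$ has been shown in \eqref{eq:chainver} to be a quasi-isomorphism in the \emph{chain} (horizontal, $L$-) direction for each fixed $M$, so I want the $E^1$-page of my spectral sequences to record $H_*$ in the $L$-direction. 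This forces the use of the \emph{horizontal filtration}.

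First I would define the horizontal filtration $F_q C_{L,M}$ to be $C_{L,M}$ when $M \leq q$ and $0$ otherwise, and likewise for $C_{\mathcal Q}$, inducing filtrations on the totalizations $\Tot(C)_\bullet$ and $\Tot(C_{\mathcal Q})_\bullet$. I would then check that these filtrations are exhaustive and bounded below in the sense of Remarks \ref{rem:ex} and \ref{rem:bb}; here a small asymmetry with the vertical case must be addressed, namely that the $M$-direction is a cochain (increasing) direction, so I would note that on each diagonal $L-M=N$ of the first quadrant only finitely many $M$ with $M\leq q$ survive, which gives exhaustiveness, while boundedness below in $q$ holds since fixing $N$ forces $L=M+N\geq 0$, hence $M\geq \max(0,-N)$, so small $q$ kills the diagonal. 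By Proposition \ref{prop:spseq} and Remark \ref{rem:bbcase} these yield bounded below, hence regular, spectral sequences, which by Theorem \ref{thm:convergence} converge:
\[
E^1_{qp}=H_{p}(C_{\bullet,q},\partial^s)\Rightarrow H_{p+q}(\Tot(C)_\bullet,d^s),
\]
and analogously for $C_{\mathcal Q}$, the $E^1$-page being the homology in the $\partial^s$ (horizontal) direction computed row by row.

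Next I would observe that the projection $P\colon C\to C_{\mathcal Q}$ is a map of filtered complexes, since it is a chain map for both $\partial^s$ and $\partial^{s*}$ and plainly respects the grading by $M$; hence it induces a morphism of the two spectral sequences to which the naturality clause of Theorem \ref{thm:convergence} applies. On the $E^1$-page this morphism is exactly the map on $H_*(C_{\bullet,M},\partial^s)$ induced by \eqref{eq:chainver}, which is an isomorphism for every $M=q$. Invoking Corollary \ref{cor:comparison} then upgrades this $E^1$-isomorphism to the desired isomorphism $P_*\colon H_*(\Tot(C)_\bullet,d^s)\to H_*(\Tot(C_{\mathcal Q})_\bullet,d^s)$, so $P$ is a quasi-isomorphism.

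I expect the only genuine obstacle to be the bookkeeping around boundedness and exhaustiveness of the horizontal filtration. Because $M$ runs in the cochain direction while the totalization is indexed by $L-M$, one must be slightly careful that the convergence hypotheses of Theorem \ref{thm:convergence} are literally met rather than merely their vertical analogues; once this is verified, the remainder is a routine transcription of Corollary \ref{cor:conv} with $\partial^s$ and $\partial^{s*}$ interchanged, which is why the authors elect to skip the details.
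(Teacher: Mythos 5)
Your plan follows the same route the paper sketches (horizontal filtration plus the comparison machinery of Theorem \ref{thm:convergence} and Corollary \ref{cor:comparison}), but the implementation has a genuine gap, and it sits exactly at the point you flag as ``bookkeeping''. The filtration $F_qC_{L,M}=C_{L,M}$ for $M\leq q$ is \emph{not} a filtration of $(\Tot(C)_\bullet,d^s)$ by subcomplexes: $\partial^{s*}$ raises $M$ by one, so it carries the row $M=q$ out of $F_q$. Hence Proposition \ref{prop:spseq} does not apply to it, and your verification of boundedness below is carried out for a filtration that is not $d^s$-stable. The only increasing, $d^s$-stable horizontal filtration is the one whose $p$-th stage keeps the rows $M\geq -p$; with that choice the associated graded pieces are again the rows and the $E^1$-map is indeed the one induced by \eqref{eq:chainver}, but now the filtration fails to be bounded below on $\Tot(C)$: on the diagonal $L-M=N$ the groups $C_{M+N,M}$ are nonzero for arbitrarily large $M$, so $F_s\Tot(C)_N\neq 0$ for every $s$. (It \emph{is} bounded on $\Tot(C_{\mathcal{Q}})$, because $(C_{\mathcal{Q}})_{L,M}$ vanishes for large $L$, but not on $\Tot(C)$.) This is not a technicality: for the direct-sum totalization, the row filtration of a double complex of this shape is exhaustive but neither bounded below nor complete, and its spectral sequence need not converge to $H_*(\Tot^{\oplus})$, so neither Theorem \ref{thm:convergence} nor Corollary \ref{cor:comparison} can be invoked. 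This is precisely the asymmetry with Corollary \ref{cor:conv}, where the first-quadrant support bounds $L$ from below on each diagonal, whereas nothing bounds $M$ from above on a diagonal of $C$.

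The statement itself is true, but it needs a different argument or an extra input. One repair uses only results already at hand: the square formed by the inclusion $\Tot(C_{\A})\to\Tot(C)$, the projections $\Tot(C_{\A})\to\Tot(C_{\mathcal{Q},\A})$ and $\Tot(C)\to\Tot(C_{\mathcal{Q}})$, and the map $\Tot(C_{\mathcal{Q},\A})\to\Tot(C_{\mathcal{Q}})$ commutes, and three of its sides are quasi-isomorphisms (Corollary \ref{cor:conv} together with \cite[Section 5.3]{put:HoSmale}); hence so is the fourth. Alternatively, one must show directly that $\Tot^{\oplus}$ of the degenerate subcomplex $DC_{\bullet,\bullet}$ is acyclic, which requires more than the row-wise acyclicity of Theorem \ref{thm:degthm} --- an acyclic-assembly argument in the very direction in which the complex is unbounded, e.g.\ by producing contracting homotopies of the rows compatible with $\partial^{s*}$. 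As written, your argument does not establish the corollary.
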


To complete the picture, we also give the dual version of Proposition \ref{prop:boundM}.

\begin{prop}
There is $N\geq 0$ such that $C_{L,M}=DC_{L,M}$ whenever $L\geq N$. 

Therefore $(C_{\mathcal{Q}})_{L,M}=0$ whenever $L\geq N$.
\end{prop}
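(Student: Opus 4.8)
The plan is to show that for $L$ sufficiently large every generator of $D^s(\Sigma_{L,M})$ already lies in the subgroup $D^s_{\mathcal{B}}(\Sigma_{L,M})$, which by the preceding corollary coincides with $DC_{L,M}$. This immediately gives $C_{L,M}=DC_{L,M}$, and therefore $(C_{\mathcal{Q}})_{L,M}=D^s(\Sigma_{L,M})/D^s_{\mathcal{B}}(\Sigma_{L,M})=0$. The bound $N$ will be extracted from the hypothesis that $\pi_s$ is finite-to-one: writing $K$ for a uniform bound on the cardinality of the fibers $\pi_s^{-1}(x)$, I would simply set $N=K$.

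The mechanism is a pigeonhole argument on the $y$-coordinates. Recall that $D^s(\Sigma_{L,M})$ is generated by classes $[E]$ of clopens $E$ contained in a single global stable set $\Sigma^s_{L,M}(e)$, and that the points of $\Sigma^s_{L,M}(e)$ are exactly the $(y_0',\dots,y_L',z_0',\dots,z_M')$ with $y_l'\in Y^s(y_l)$, $z_m'\in Z^s(z_m)$ and the common-image constraint $\pi_s(y_l')=\pi_u(z_m')=:x'$. Fixing any $e'\in E$, its coordinates $y_0',\dots,y_L'$ all belong to the single fiber $\pi_s^{-1}(x')$, whose cardinality is at most $K$. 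As soon as $L\geq N=K$ there are strictly more coordinates than fiber elements, so $y_i'=y_j'$ for some $i\neq j$.

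I would then show that this coincidence is not accidental to $e'$ but holds throughout the stable set. Indeed, $y_i'=y_j'$ forces $Y^s(y_i')=Y^s(y_j')$, and for any further point of $\Sigma^s_{L,M}(e')$ the $i$-th and $j$-th coordinates lie in this common stable set and share the same image under $\pi_s$; since $\pi_s$ is $s$-bijective, hence injective on each stable set of $Y$, these two coordinates must agree. Consequently the transposition $t=(i\;j)\in S_{L+1}$ acts as the identity on $\Sigma^s_{L,M}(e')=\Sigma^s_{L,M}(e)$, so $t(E)=E$ and $[E]$ is fixed by the non-trivial transposition $t$. This is precisely a generator of $D^s_{\mathcal{B}}(\Sigma_{L,M})$ of the first kind, so $C_{L,M}=DC_{L,M}$ for all $L\geq N$ and the vanishing of $(C_{\mathcal{Q}})_{L,M}$ follows.

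The step I expect to be the genuine obstacle is the uniform fiber bound: one must know that $\sup_{x\in X}|\pi_s^{-1}(x)|<\infty$, and not merely that each individual fiber is finite. This is the exact dual of the bound on the fibers of $\pi_u$ that underlies Proposition \ref{prop:boundM} (Putnam's Theorem 4.2.12), and it relies on $\pi_s$ being a finite-to-one factor map assembled from shifts of finite type. The only other point deserving care is the propagation of the coordinate equality across the whole stable set, where the $s$-bijectivity of $\pi_s$ must be invoked explicitly rather than just its finiteness.
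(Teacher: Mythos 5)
Your proof is correct and follows essentially the same route as the paper: a pigeonhole argument on the $y$-coordinates lying in a single finite fiber of $\pi_s$, followed by propagation of the coincidence $y_i'=y_j'$ across the whole stable orbit via injectivity of the resolving maps on stable sets. The only cosmetic difference is that you conclude $t(E)=E$ for a transposition $t$ (a generator of $D^s_{\mathcal{B}}$ of the first kind), whereas the paper concludes $E=(\sigma_k s_i)(E)$ (a degenerate chain); these are interchangeable under the identification $DC_{L,M}=D^s_{\mathcal{B}}(\Sigma_{L,M})$ established in the preceding corollary, and the uniform fiber bound you flag is indeed the standard fact about finite-to-one $s$-resolving factor maps that the paper also takes as given.
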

\begin{proof}
Recall that we can choose the $s/u$-bijective pair $\pi=(Y,\psi,\pi_s,Z,\zeta,\pi_u)$ so that $\pi_s$ is finite-to-one. Let $N-1$ be the maximum cardinality of a fiber of $\pi_s$ and $L\geq N$. A generic generator for $D^s(\Sigma_{L,M})$ is a compact open in some stable orbit $E\subseteq \Sigma^s_{L,M}(e)$. By the choice of $L$, there are $i$ and $k$ such that $e=(y_0,y_1,\dots,y_L,\dots)$ with $y_i=y_{i+k}$. Since $\delta_{i+k}\colon \Sigma^s_{L,M}(e)\to \Sigma^s_{L-1,M}(\delta_{i+k}(e))$ is a homeomorphism, we see that $E=(\sigma_ks_i)(E)$.
\end{proof}

\subsection{Symmetric cosimplicial groups}

As was hinted at the end of the previous section, the methods so far can be promptly dualized by considering the symmetric cosimplicial group $(\Sigma_{L,\bullet},\delta_{,m}^{s*},(\sigma_i s_{,m})^{s*})$ for fixed $L\geq 0$. 

We will omit most details since this is a standard argument, and simply outline how to define the cochain complex of degenerate chains, which is the essential object needed to define the relevant ``symmetric'' Moore complex. 

Where we used quotients in the previous section, we now have subgroups; moreover, by interpreting ``coinvariants'' to mean equivalence classes modulo $\langle a-\text{sgn}(\alpha)\alpha(a)\rangle$, we are led to consider the dual notion of ``invariants''. This brings to defining
\begin{align*}
CC_{L,M}=\{a\in C_{L,M} \mid\, & (\sigma_js_{,m})^{s*}(a)=0,\, a-\text{sgn}(\alpha)\alpha(a)=0\\
&\text{for all $m=0,\dots,M, j=1,\dots,M+1-m,\alpha\in S_M$}\},
\end{align*}
that is the invariant elements lying in intersection of kernels for all degeneracy maps.

By the (dual) argument of \ref{thm:degthm} one can proceed to show that the quotient complex $C_{L,\bullet}/CC_{L,\bullet}$ is acyclic, so the inclusion 
\[
CC_{L,\bullet}\to C_{L,\bullet}
\]
is a quasi-isomorphism. Finally, we mention that the projection map in Theorem \ref{thm:degthm} clearly induces a chain map 
\[
CC_{\bullet,M}\to \frac{CC_{\bullet,M}}{DC_{\bullet,M}\cap CC_{\bullet,M}},
\]
which is an isomorphism on homology. An explicit inverse for the map is constructed in \cite[page 98]{put:HoSmale}.

\section{Projective covers}\label{sec:proj}

Given a sheaf $S$ over a paracompact Hausdorff space $X$, sheaf cohomology $H^*(X,S)$ is computed from the complex $\Hom_X(\Z,I^\bullet)$, where $I^\bullet$ is an injective resolution of $S$. It is true, but perhaps less well-known, that the same calculation can be performed by means of the complex $\Hom_X(E^\bullet X,S)$, where $E^\bullet X$ is a semi-simplicial resolution of $X$ arising from a projective cover $E$ of $X$ (see \cite{dyck:proj}).

This alternative path to computing sheaf cohomology calls for an analogy with the homology theory for Smale spaces. Indeed, we have seen how the defining complex arises by applying Krieger's invariant to the bisimplicial space induced by a chosen $s/u$-bijective pair. So the role of the global section functor is played, in our context, by the dimension group construction for subshifts. 

The analogy is stronger when we start with a Smale space with totally disconnected stable sets. In this case, the homology is computed by the complex $(C_{\mathcal{Q}}(\pi)_{\bullet,0},\partial^s)$ and the $s/u$-bijective pair is reduced to a simple $s$-bijective map
\begin{equation*}
\pi\colon \Sigma_0\to X,
\end{equation*}
where $\Sigma_0$ is a subshift of finite type (see \cite[Section 7.2]{put:HoSmale}). Thus in this case the analogy calls for considering $\Sigma_0$ as a ``projective'' cover of $X$, together with its associated simplicial resolution $\Sigma_\bullet$ obtained by taking iterated fibered products over~$\pi$.

It should be noted that, while the usage of the term ``resolution'' is somewhat justified (since by definition $\Sigma_\bullet$ computes the ``right'' homology groups), the attribute \emph{projective} requires further reasons. This section contains a simple theorem in this direction.

In the category of compact Hausdorff spaces and continuous maps, a projective object is a space $E$ such that, whenever we are given $f\colon E\to A$ and $g\colon B \twoheadrightarrow A$ (onto), there is $h\colon E\to B$ with $f=g\circ h$. 

A \emph{projective cover} of $X$ is a pair $(E,e)$ with $E$ projective and $e\colon E\twoheadrightarrow X$ \emph{irreducible}, i.e., mapping proper closed sets onto proper subsets.

Gleason \cite{glea:proj} has proved that projective covers exist and are unique (up to a homeomorphism making the obvious diagram commute). Moreover he showed that a space is projective if and only if it is extremally disconnected, i.e., the closure of each of its open sets is open. Recall that $\Sigma_0$ is a compact, Hausdorff, totally disconnected space. In general extremally disconnected Hausdorff spaces are totally disconnected, but the converse does not hold. 

Let $(X,\phi)$ be a non-wandering Smale space and $(E,e)$ its projective cover. Note that $\phi$ induces a self-homeomorphism $\tilde{\phi}$ of $E$ and $e$ intertwines $\tilde{\phi},\phi$. Consider the totally disconnected space $\Sigma_{0,0}(\pi)$ associated to a choice of $s/u$-bijective pair $\pi$ (this is the correct analogue of $\Sigma_0$ when $X$ is not totally disconnected along the stable direction). The difference between $E$ and $\Sigma_{0,0}(\pi)$ can be recast in terms of the dependence of the latter space on $\pi$. This suggests that in order to make sense of projectivity in the context of Smale spaces we ought to consider \emph{all} $s/u$-bijective pairs at the same time. 

The discussion on projectivity will inevitably bring us outside the category of Smale spaces (e.g., extremally disconnected spaces are not metrizable, unless they are discrete), therefore the following setup is in the context of (invertible) dynamical systems. See also Remark \ref{rem:mk} below.
An open set in a space $X$ is called regular if it is the interior of its closure. A \emph{regular partition} $\P$ of $X$ is a finite collection of disjoint regular opens in $X$ whose union is dense.

Let $(X,\phi)$ be an invertible dynamical system and $\P$ a regular partition of $X$. View $\P$ as an alphabet and let $a_1a_2\cdots a_n$ be a word. We say this word is \emph{allowed} if $\cap_{i=1}^n \phi^{-i}(a_i)\neq \emptyset$ and let $L_{\P}$ be the family of allowed words. It can be checked \cite[Section 6.5]{lind:marcus} that $L_\P$ is the language of a shift space that we denote $\Sigma_\P$. Note that for each $x\in\Sigma_\P$ and $n\in \N$, the set
\[
D_n(x)=\bigcap_{i=-n}^n\phi^{-i}(x_i)\subseteq X
\]
is nonempty.

\begin{defi}
We say that $\P$ is a \emph{symbolic presentation} of $(X,\phi)$ if for every $x\in \Sigma_\P$ the set $\cap_{n=0}^\infty \overline{D_n(x)}$ consists of exactly one point. We call $\P$ a \emph{Markov partition} if $\Sigma_\P$ is a subshift of finite type.
\end{defi}

Other definitions of Markov partitions are common in the literature, e.g.,~\cite{bow:mA}.
Notice that the set of regular partitions is directed: we write $\P_1\leq \P_2$ if $\P_2$ is a refinement of $\P_1$, i.e., each member of $\P_2$ is contained in a member of $\P_1$. Given partitions $\P_1,\P_2$ we can define an upper bound $\P_1\cap \P_2$, obtained by taking pairwise intersections of elements from each partition.

If $(X,\phi)$ admits a symbolic presentation $\P_1$, then given any regular partition $\P_2$ we have that $\P_1\cap \P_2$ is again a symbolic presentation. In other words, once a symbolic presentation exists, we can guarantee that the family of symbolic presentations is cofinal among all regular partitions.

Associated to $\P_1$ we get a factor map (i.e., an equivariant surjection) $\pi_{\P_1}\colon\Sigma_{\P_1}\to X$ (see \cite[Proposition 6.5.8]{lind:marcus}). If $\P_2$ is a refinement of $\P_1$, then $\pi_{\P_2}\colon\Sigma_{\P_2}\to X$ is a factor map which factors through $\Sigma_{\P_1}$. Indeed if we view $\P_1$ and $\P_2$ as alphabets, there is a code $\mu_{\P_1,\P_2}$ which assigns to each letter $a\in \P_2$ the unique letter $b\in \P_1$ such that $a\subseteq b$, and $\pi_{\P_2}=\pi_{\P_1}\circ\mu_{\P_1,\P_2}$. 

As a result, if $I$ denotes the family of symbolic presentations of $(X,\phi)$ (assuming it is nonempty), then $(\Sigma_i,\mu_{ij},\pi_i)_{i\leq j\in I}$ defines a projective system in the category of dynamical systems over $X$. Let $E$ be the inverse limit of $(\Sigma_i,\mu_{ij},\pi_i)_{i\leq j\in I}$. Since $E\subseteq \prod_i\Sigma_i$, the shift map $\sigma$ applied componentwise turns $E$ into a dynamical system. Given $\P\in I$, denote by $p_\P$ the canonical projection $E\to \Sigma_\P$.

The case of Smale spaces is as follows. A non-wandering Smale space $(X,\phi)$ always admits a Markov partition \cite[Section 7]{ruelle:thermo}. If we denote such partition by $\M$, then $\Sigma_\M$ is a subshift of finite type endowed with an \emph{almost one-to-one} factor map $\pi_\M\colon \Sigma_\M\to X$ (i.e., an equivariant surjection that is finite-to-one, and the set of points in $X$ with single preimage is a dense $G_\delta$). If $\P$ is a refinement of $\M$, then $\pi_\P\colon\Sigma_\P\to X$ is an almost one-to-one factor map which factors through $\Sigma_\M$. As a result, in this case we can take $I$ to be the family of refinements of $\M$.

\begin{rema}\label{rem:mk}
It is worth noting that $\{\Sigma_i\}_{i\in I}$ is a collection of shift spaces that are not necessarily of finite type (in particular, they are not Smale spaces). That is because the refinement of a Markov partition is not a Markov partition (in general). It is unclear to the author if there are conditions under which a Smale space admits a cofinal collection of Markov partitions.
\end{rema}

\begin{theo}\label{thm:proj}
Let $(X,\phi)$ be a dynamical system which admits a symbolic presentation $\P$. Suppose $(\Sigma_i,\mu_{ij},\pi_i)_{i\leq j\in I}$ is the projective system associated to the collection of symbolic presentations of $(X,\phi)$ and denote by $(E,\sigma)$ the associated inverse limit. Then $(E,\sigma)$ is a projective cover of $(X,\phi)$ and the map $e\colon E \to X$ is given by the composition
\[
\xymatrix{E \ar[r]^-{p_\P} & \Sigma_\P \ar[r]^-{\pi_\P} & X}.
\]
\end{theo}
\begin{proof}
Let $J$ be the family of regular partitions of $X$. Given $\P\in J$, denote by $X(\P)$ the topological space given by the disjoint union $\cup_{Y\in \P}\overline{Y}$. Then by \cite[Proposition 17]{rump:proj} we have that
\[
E^\prime=\varprojlim_{j\in J} (X(j),f_{jk})
\]
is a projective cover of $X$ (here $f_{jk}\colon X(k)\twoheadrightarrow X(j)$ when $j\leq k$ is the obvious surjection induced by the refinement). First of all we notice that $I$ is cofinal in $J$ so that the limit can be taken over the index set $I$. Secondly, notice that for each $i\in I$ there is a natural surjection $p_i\colon X(i) \to X$. We claim that $\pi_i\colon \Sigma_i\to X$ factors through $p_i$. Indeed, note that if $x\in \Sigma_i$, then $\pi_i(x)$ belongs to $\overline{x_0}\in i$ and $\pi_i(x)$ admits a unique lift $\tilde{x}\in \overline{x_0}\subseteq X(i)$. Define $\tilde{\pi}_i(x)=\tilde{x}$ and by construction $\pi_i=p_i\circ\tilde{\pi}_i$.

It is easy to check that ($i\leq j$)
\[
\xymatrix{ \Sigma_j\ar[r]^-{\mu_{ij}} \ar[d]^-{\tilde{\pi}_j} & \Sigma_i \ar[d]^-{\tilde{\pi}_i}\\
X(j) \ar[r]^-{f_{ij}} & X(i)}
\]
is a commuting diagram so that $\{\tilde{\pi}_i\}_{i\in I}$ induces a (continuous) map of spaces $\tilde{\pi} \colon E \to~E^\prime$. Since $\tilde{\pi}$ is a map of compact Hausdorff spaces, we only need to show it is bijective in order to get the required homeomorphism $E\cong E^\prime$. In fact, it is sufficient to show that it is one-to-one, because $\tilde{\pi}(E)\subseteq E^\prime$ is a closed set mapping onto $X$, thus by irreducibility $\tilde{\pi}(E)=E^\prime$.

Suppose $x,y\in E, x\neq y$, so there is $i\in I$ with $x_i\neq y_i$. Recall that $x_i$ and $y_i$ are bi-infinite sequences in $\Sigma_i$, let us denote their components by $(x_i^k)_{k\in\Z},(y_i^k)_{k\in\Z}$. 

There is $m\in \Z$ with $x_i^m\neq y_i^m$. Note that $\phi^{-m}(i)$ is also a symbolic presentation, and if we set $\alpha=i\cap \phi^{-m}(i)$ we have $i\leq \alpha$, thus there are elements $x_\alpha,y_\alpha\in \Sigma_\alpha$, appearing at the $\alpha$-th component of respectively $x,y$, and satisfying $\mu_{i\alpha}(x_\alpha)=x_i, \mu_{i\alpha}(y_\alpha)=y_i$.

We claim $\tilde{\pi}_\alpha(x_\alpha)\neq \tilde{\pi}_\alpha(y_\alpha)$. In fact, there are $A_x,B_x,A_y,B_y\in i$ with
\begin{align*}
x_\alpha^0 &=x_i^0 \cap \phi^{-m}(A_x)  & x_\alpha^m&=x_i^m\cap\phi^{-m}(B_x)\\
y_\alpha^0 &=x_i^0 \cap \phi^{-m}(A_y)  & x_\alpha^m&=x_i^m\cap\phi^{-m}(B_y)\\
\end{align*}
\vspace{-8ex}\begin{align*}
x_i^0 \cap \phi^{-m}(A_x)\cap \phi^{-m}(x_i^m)\cap\phi^{-2m}(B_x)&\neq\emptyset \\
y_i^0 \cap \phi^{-m}(A_y)\cap \phi^{-m}(y_i^m)\cap\phi^{-2m}(B_y)&\neq\emptyset.
\end{align*}
From the above we derive $A_x=x_i^m,A_y=y_i^m$ and in particular $A_x\neq A_y$. But by definition 
\begin{align*}
\tilde{\pi}_\alpha(x_\alpha) &\in \overline{x_i^0 \cap \phi^{-m}(A_x)}\subseteq X(\alpha)\\
\tilde{\pi}_\alpha(y_\alpha) &\in \overline{y_i^0 \cap \phi^{-m}(A_y)}\subseteq X(\alpha)
\end{align*}
so $\tilde{\pi}_\alpha(x_\alpha)$ cannot be equal to $\tilde{\pi}_\alpha(y_\alpha)$. This proves injectivity of $\tilde{\pi}$ and concludes the proof.
\end{proof}

\begin{rema}
At first sight, it it reasonable to view $E$ as the ``universal'' version of the spaces of the form $\Sigma_{0,0}(\pi)$. In the same spirit, one could think of defining a ``universal $s/u$-bijective pair'' $\pi=(E_s,\tilde{\psi},e_s,E_u,\tilde{\zeta},e_u)$, where $E_s$ and $E_u$ would be projective with respect to $s$-bijective and $u$-bijective maps.

The first step towards this program would be applying Putnam's lifting theorem \cite{put:lift} to the projective system $\{\Sigma_i\}_{i\in I}$ of Theorem \ref{thm:proj} (assuming the system, or a cofinal replacement, consists entirely of shifts of finite type). Unfortunately, in order to lift the entire (infinite) system, limits of spaces are necessary, thus we run once again into the problem that these limits are not Smale spaces, and the notions of $s$- and $u$-bijective maps don't work well in this context. This suggests that, if one desires importing the machinery of homological algebra in the setting of Smale spaces, the ambient category should be chosen with care. A good candidate for this category might be the equivariant (with respect to the stable or unstable equivalence relation) $\mathrm{KK}$-category, but this idea will not be pursued in the present paper.
\end{rema}

\backmatter

\bibliographystyle{smfplain}
\bibliography{BibliographyBST}   
\end{document}